\DeclareMathOperator{\cay}{Cay}
\begin{document}

\def\PB{\color{purple}}
\def\JT{\color{blue}}

\newcommand{\C}{\mathbb{C}}
\newcommand{\F}{\mathbb{F}}
\newcommand{\R}{\mathbb{R}}
\newcommand{\Q}{\mathbb{Q}}
\newcommand{\N}{\mathbb{N}}
\newcommand{\Z}{\mathbb{Z}}

\newtheorem{thm}{Theorem}
\newtheorem{prop}[thm]{Proposition}
\newtheorem{coro}[thm]{Corollary}
\newtheorem{lem}[thm]{Lemma}
\newtheorem{conj}[thm]{Conjecture}

\theoremstyle{definition}
\newtheorem{defn}[thm]{Definition}
\newtheorem{rem}[thm]{Remark}
\newtheorem{ex}[thm]{Example}
\newtheorem{exs}[thm]{Examples}
\newtheorem{obs}[thm]{Observation}

\numberwithin{thm}{section}

\title{Cops and robbers on directed and undirected abelian Cayley graphs}

\begin{abstract}  
We show that the cop number of directed and undirected Cayley graphs on abelian groups has an upper bound of the form of $O(\sqrt{n})$, where $n$ is the number of vertices, by introducing a refined inductive method. With our method, we improve the previous upper bound on cop number for undirected Cayley graphs on abelian groups, and we establish an upper bound on the cop number of directed Cayley graphs on abelian groups. We also use Cayley graphs on abelian groups to construct new \emph{Meyniel extremal families}, which contain graphs of every order $n$ with cop number $\Theta(\sqrt{n})$.
\end{abstract}

\thanks{The authors are partially supported by the Natural Sciences and Engineering Research Council of Canada (NSERC) and the Fonds de Recherche du Québec – Nature et technologies (FRQNT)}

\subjclass[2010]{Primary 05C57; Secondary 05C20, 05C25, 05E15}
\keywords {Cayley graphs, Cops and robbers, Meyniel's conjecture, Directed graphs, Meyniel extremal family}

\author{Peter Bradshaw}
\address{Department of Mathematics, Simon Fraser University, Vancouver, Canada}
\email{peter\_bradshaw@sfu.ca}

\author{Seyyed Aliasghar Hosseini}
\address{Department of Mathematics, Simon Fraser University, Vancouver, Canada}
\email{seyyed\_aliasghar\_hosseini@sfu.ca}

\author{J\'er\'emie Turcotte}
\address{D\'{e}partment de math\'{e}matiques et de statistique, Universit\'{e} de Montr\'{e}al, Montr\'eal, Canada}
\email{mail@jeremieturcotte.com}
\urladdr{www.jeremieturcotte.com}

\maketitle

\section{Introduction}
We study the game of cops and robbers, a game in which a team of cops attempts to capture a robber while playing on the vertices of a graph. The game is played on a graph $\Gamma$ which is finite and connected, and can be either undirected or directed. The cops play as a team against the robber. Before the game starts, each cop chooses a starting vertex on $\Gamma$. The robber then does the same. The game alternates between \emph{cop turns} and \emph{robber turns}, with the first turn being a cop turn. On a cop turn, each cop can move to a neighbouring vertex or may choose to pass. The robber then has the same options on a robber turn. There is no restriction preventing two or more cops from sharing the same vertex. If one of the cops ever shares a vertex with the robber, then we will say that the robber is \emph{captured}, and capturing the robber is the cops' objective in order to win the game. On the other hand, if the cops never manage to capture the robber, then we say that the robber wins. The game is played with full information. The cop number of the graph $\Gamma$, written $c(\Gamma)$, is the minimum number of cops needed for a strategy that ensures the cops' victory.

The game of cops and robbers was first introduced for undirected graphs in \cite{Quilliot} by Quilliot, as well as in \cite{Nowakowski} by Nowakowski and Winkler. The concept of cop number was introduced shortly afterwards by Aigner and Fromme in \cite{Aigner}. The cop number is well-studied on many classes of graphs; bounds are known, for example, for graphs of high girth \cite{FranklGirth}, Cayley graphs \cite{PB, FranklGirth, FranklCayley, HAMIDOUNE1987289}, intersection graphs \cite{Gavenciak}, and graphs with certain forbidden subgraphs \cite{Joret2008TheCA, Masood}.

The game of cops and robbers can also be adapted to directed graphs, or digraphs, by making certain modifications. First, we require the digraphs on which the game is played to be strongly connected. Second, when a cop or robber moves along an arc to an adjacent vertex, we require that the cop or robber move in the same direction as the arc. This is in contrast to undirected graphs, in which a cop or robber may move along an edge in any direction, as edges have no orientation. The game of cops and robbers was first considered on digraphs by Hamidoune in \cite{HAMIDOUNE1987289}, and this directed version of the game has gained popularity recently; see, for example, \cite{Frieze,oriented2, Seyyedthesis, Seyyed, Seamone}.

Perhaps the furthest reaching and most famous question regarding the cop number is Meyniel's conjecture, which asks whether the cop number of any connected graph on $n$ vertices can be bounded by $O(\sqrt{n})$. Frankl first mentions Meyniel's conjecture for undirected graphs in \cite{FranklGirth}, and Baird and Bonato ask whether Meyniel's conjecture holds for strongly connected digraphs in \cite{Baird}. Meyniel's conjecture is known, for example, to hold for undirected graphs of diameter 2 \cite{lupeng, wagner}. The first author has also shown in \cite{PB} that the cop number of Cayley graphs on abelian groups satisfies Meyniel's conjecture, with an upper bound of $7\sqrt{n}$. Of course, the cop number is bounded above by a constant for many graph classes, such as graphs of bounded genus \cite{Aigner, QUILLIOT198589, Schroder2001}, graphs of bounded treewidth \cite{Joret2008TheCA} and graphs without long induced paths \cite{Joret2008TheCA}. In this paper, we will generalize the methods of \cite{PB} and \cite{FranklCayley} to both improve the upper bound for the cop number of Cayley graphs on abelian groups and show that directed Cayley graphs on abelian groups also satisfy Meyniel's conjecture, which will make these graph classes among the few large classes known to satisfy the conjecture.

Our paper is divided into multiple sections. In Section \ref{lemmasection}, we prove a general lemma about the cop number of Cayley graphs and digraphs on abelian groups. In Section \ref{undirectedsection}, we show that the cop number of an undirected Cayley graph on an abelian group of $n$ elements can be bounded by about $0.94\sqrt{n}+\frac{7}{2}$ and show that some improvements are possible by considering the prime decomposition of $n$. In Section \ref{directedsection}, we use the same methods to bound the cop number of a directed Cayley digraph on an abelian group of $n$ elements by about $1.33\sqrt{n}+2$. 
In Section \ref{lowersection}, we construct, for an infinite number of values $n$, undirected Cayley graphs on abelian groups of $n$ elements with cop number $\frac{1}{2}\sqrt{n}$, and directed Cayley graphs on abelian groups of $n$ elements with cop number $\sqrt{n}$. With a simple modification of these constructions, we obtain families of graphs on $n$ vertices, for any integer $n \geq 1$, with cop number $\Theta(\sqrt{n})$, which gives new Meyniel extremal families of graphs and digraphs. To the authors' knowledge, the family of digraphs that is obtained has the largest cop number in terms of $n$ of any known digraph construction. Finally, in Section \ref{furtherdirections}, we discuss possible improvements and further directions.


\section{Notation and a general strategy}\label{lemmasection}

In this section, we will establish some notation and outline our general approach to capturing a robber on a Cayley digraph on an abelian group. All groups that we consider in this paper are abelian. A \emph{directed Cayley graph} on an abelian group is defined as follows:

\begin{defn}
Let $(G,+)$ be a finite abelian group, and let $S \subseteq G$ be a generating set of $G$ with $0_G\notin S$. The \emph{Cayley graph} $\Gamma$ generated by $G$ and $S$ is defined as follows:
\begin{itemize}
\item $V(\Gamma) = G$;
\item For any $u,v \in G$, $\Gamma$ contains the arc $(uv)$ if and only if $v-u \in S$.
\end{itemize}
We often write $\cay(G,S)$ to refer to the Cayley digraph generated by $G$ and $S$.
\end{defn}
We will often refer to directed Cayley graphs on abelian groups as \emph{directed abelian Cayley graphs} or \emph{abelian Cayley digraphs}. 
In this definition, the requirement that $S$ generate $G$ ensures that the digraph $\cay(G,S)$ is strongly connected. We recall that in the game of cops on robbers on directed graphs, cops and robbers must traverse edges according to their orientations, so a graph must be strongly connected in order to allow a cop or robber to reach any vertex from any other vertex. We note that for a Cayley graph on an abelian group $G$ generated by a set $S \subseteq G$, if $S = -S$, then all arcs of $\cay(G,S)$ are bidirectional. In this case, the game of cops and robbers on the directed graph $\cay(G,S)$ is equivalent to the game on the undirected graph obtained from $\cay(G,S)$ by replacing each arc with an undirected edge and removing parallel edges. Therefore, when we wish to consider the game of cops and robbers on an undirected Cayley graph on an abelian group, we will require that $S = -S$, and we will regard $\cay(G,S)$ as an undirected graph. We often refer to an undirected Cayley graph on an abelian group as an \emph{abelian Cayley graph}.

When playing cops and robbers on a Cayley digraph on an abelian group $G$ generated by $S \subseteq G$, we imagine that at each turn, a cop or robber occupies some group element $g \in G$. In the Cayley digraph $\cay(G,S)$, the vertex $g$ has an out-neighbor $g+s$ for each $s \in S$, and thus we imagine that our cop or robber has a list of possible moves corresponding to the elements of $S$. This cop or robber may choose any element $s \in S$ on its turn and move to the group element $g + s \in G$. We call this \textit{playing the move $s$}. When a cop or robber stays at its current vertex, we say that the cop or robber plays the move $0_G$. To capture the robber, we will let our cops follow a strategy that makes certain robber moves $s \in S$ unsafe for the robber. As we make certain robber moves unsafe, the robber's list of possible moves will become shorter, and the robber's movement options will become more limited. As the robber's movement becomes more limited, it will become easier for the cops to make even more robber moves unsafe, and we will be able to limit the robber's movement further. Eventually, we will make every move unsafe for the robber, and the robber will have no way to avoid capture.  The precise meaning of an unsafe move is discussed below.

The approach of capturing the robber by reducing the number of safe robber moves is introduced by Frankl in \cite{FranklCayley}, which is itself inspired by the methods used by Hamidoune in \cite{HAMIDOUNE1987289}. Frankl shows that on an undirected abelian Cayley graph, one cop can usually make two robber moves unsafe, so the number of cops required to capture a robber is about half the size of the graph's generating set, as shown in the following theorem.
\begin{thm} \cite{FranklCayley}\label{thmFrankl}
If $\Gamma$ is a Cayley graph on an abelian group with generating set $S$ such that $S=-S$ and $0_G\notin S$, then
$$c(\Gamma)\leq \left\lceil \frac{|S|+1}{2}\right\rceil.$$
\end{thm}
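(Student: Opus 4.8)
Following the heuristic above, the plan is to make ``unsafe'' precise and to bring the cops in one at a time, each cop permanently killing two of the robber's moves, with one last cop reserved for the capture. I would call a move $s\in S$ \emph{unsafe} (at a moment when the cops are about to move and then the robber) if the cops have a strategy that captures a robber who responds by playing $s$; a winning strategy with $k=\lceil(|S|+1)/2\rceil$ cops is then one that forces a position in which every $s\in S$ is unsafe and moreover passing is not safe either, so the robber has nowhere to go. Pair the elements of $S$ into $\lfloor|S|/2\rfloor$ pairs, matching $g$ with $-g$ whenever $g$ is not an involution and pairing involutions with one another otherwise, leaving at most one unpaired element (only when $|S|$ is odd); assign one cop to each pair and one further cop both to the possible leftover element and to finishing the game, so that $\lfloor|S|/2\rfloor+1=\lceil(|S|+1)/2\rceil$ cops are used. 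I would argue by induction on $|S|$, the base cases $|S|\le 2$ being $K_2$ and the cycles $C_n$, where the bound is $1$ and $2$.

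For the inductive step take one pair, say $\{g,-g\}$ (or a pair $\{g,g'\}$ of involutions), set $H=\langle g\rangle$ (resp.\ $H=\langle g,g'\rangle\cong\Z_2\times\Z_2$), and pass to the quotient $\bar\Gamma=\cay(G/H,\bar S)$, where $\bar S$ is the image of $S$ with $0_{G/H}$ removed. This is again a connected abelian Cayley graph with $|\bar S|\le|S|-2$, so by induction $c(\bar\Gamma)\le k-1$. Now let $k-1$ cops play an optimal strategy for $\bar\Gamma$, read off coordinatewise: when the robber plays $s$ with $\bar s\ne 0_{G/H}$ they react as though he had played $\bar s$ in $\bar\Gamma$, and when he plays a move inside $H$ (namely $\pm g$, or a pass) they treat it as a pass. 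Because $G$ is abelian, each coset $a+H$ together with the arcs from $\pm g$ is a cycle, and translation by any $s\in S$ is an isomorphism between such coset-cycles; this is what makes the coordinatewise bookkeeping consistent and, in particular, keeps the $\{g,-g\}$-moves unsafe once they have been made so. After finitely many moves these cops win the simulated game on $\bar\Gamma$: one of them, $A$, lies in the robber's $H$-coset, and the robber can thereafter move along that coset-cycle only by playing $g$ or $-g$, every other move merely sliding the whole configuration along an isomorphism of coset-cycles. Finally I would bring in the last cop: together with $A$ (recycling, if needed, a cop whose quotient task is done), two cops can corner the robber on the coset-cycle just as two cops corner a robber on $C_n$, which makes $g$ and $-g$ unsafe and then forces the capture.

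The real obstacle, I expect, is exactly this last handoff, and getting the count to come out to $\lceil(|S|+1)/2\rceil$ and not one more. The turn order is the difficulty: a cop moves before seeing the robber's reply, so a lone shadowing cop cannot simultaneously stay in the robber's coset and push toward him along the coset-cycle, and naively one would want a dedicated partner inside every coset, which would blow up the count. The argument must exploit that the robber faces the symmetric conflict — each turn he spends evading along the coset-cycle (playing $\pm g$) is a turn he does not spend changing cosets, hence a turn on which the quotient cops gain ground — which is what lets the team reuse cops across levels, so that the ``capture'' cop need not be a fresh cop for every pair. Making this dovetailing rigorous, and in particular handling the involution case (where the coset-cycle is a $4$-cycle) without wasting a cop, is where the work lies.
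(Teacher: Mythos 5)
There is a genuine gap, and you have put your finger on it yourself: the ``last handoff'' is not a technicality but the place where your pairing breaks down. You pair each $g$ with $-g$, so that guarding a pair means cornering the robber on the coset-cycle of $\langle g\rangle$, which genuinely requires two cops (a lone shadowing cop cannot simultaneously stay in the coset and push along it, as you observe). Your plan to recycle quotient cops for this second role is not justified: those cops must keep playing their quotient strategies for the remaining pairs, and the dovetailing you sketch is exactly the unresolved step. With one cop per pair plus one spare, the count does not close under your decomposition.

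The paper's proof (following Frankl) avoids this entirely by choosing the pairs differently: it pairs $a,b\in T$ with $a+b\neq 0_G$, i.e.\ it deliberately does \emph{not} pair an element with its inverse. The single nonzero element $k=a+b$ then ``accounts for'' both moves: a cop standing at $r+\gamma k$ (with $r$ the robber's position, $\gamma\geq 0$) answers the robber's move $a$ by playing $-b\in S$ and the move $b$ by playing $-a\in S$, each time reducing $\gamma$ by one, and simply copies every other robber move. Hence the robber can play $a$ or $b$ only finitely often, and \emph{one} cop retires both moves. The quotient is used only to get that cop onto a vertex of the form $r+\gamma k$ --- it is $G/\langle k\rangle$ with $k=a+b$, reached by induction on $|G|$ --- after which the game continues in the original graph with induction on the size of the surviving robber moveset $T$, not on $|S|$ with a fresh quotient per pair. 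An inverse pair $\{a,-a\}$ is confronted directly only once, as a terminal base case when it is all that remains of $T$; there the two-cops-on-a-cycle argument is applied a single time and the bound $\lceil(|T|+1)/2\rceil=2$ absorbs it. Without the $a+b\neq 0_G$ device, your argument as written does not go through.
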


When considering cops and robbers on a Cayley digraph on an abelian group $G$ generated by $S \subseteq G$, we will often define another set $T \subseteq S$ consisting of all of the moves of $S$ that the robber can still play safely, and we will assume that the robber chooses a move from $T$ on each turn in order to avoid unsafe moves. We will call $T$ the robber's \emph{moveset}. In other words, we will often consider a restricted version of the game in which on every turn, the robber is forced to play a move from a set $T$.

The following definition, which originally appears in \cite{PB} in a slightly different form, is closely related to the concept of limiting the robber's moves. 

\begin{defn}\label{accountsdef}
Let $G$ be an abelian group, and let $S \subseteq G$ and $0_G\notin S$. Given an element $a \in S$, we say that an element $k\in G\setminus\{0_G\}$ accounts for $a$ (with respect to $S$) if there exists an element $b \in S\cup \{0_G\}$ such that $a - b=k$.
\end{defn}

We give some intuition behind the reason that this concept is useful in limiting the moves of a robber on an abelian Cayley digraph. Consider an abelian group $G$ generated by a set $S$, and suppose that a game of cops and robbers is played on $\cay(G,S)$ in which the robber has a moveset $T \subseteq S$. Suppose that some element $k \in G$ accounts for a robber move $a \in T$. Then there exists an element $b \in S\cup \{0_G\}$ such that $a - b = k$. If the robber occupies a vertex $r \in G$, then a cop $C$ at $r + k$ can prevent the robber from playing $a$; if the robber plays $a$, then $C$ can play $b$ to capture the robber. Furthermore, if the robber plays another move $a' \in T$, then $C$ can also play $a'$ and maintain a difference of $k$ with the robber. Thus, on each subsequent turn, the robber must not play $a$, and we see that a cop $C$ at $r + k$ has a strategy to essentially remove $a$ from the robber's moveset. Similarly, a cop $C$ at $r + \gamma k$ for some nonnegative integer $\gamma$ can also essentially remove $a$ from the robber's moveset by considering the following observation. If the robber plays the move $a$ $\gamma$ times, then $C$ can respond with $b$ each time, and $C$ will capture the robber. As $C$ can maintain its ``difference" in $G$ with the robber by copying each move $a' \neq a$ that the robber plays, the robber can only play $a$ a finite number of times before being captured, and hence the robber must eventually abandon the move $a$.  We illustrate this concept in Figure  \ref{figurestrategy}. This strategy of using a cop to ``copy" the robber's moves and eventually prevent the robber from playing a certain move previously appears in \cite{FranklCayley} and \cite{HAMIDOUNE1987289}.

\begin{figure}[h]
\begin{tikzpicture}[scale=1.2,
dot/.style = {circle, fill, minimum size=#1,
inner sep=0pt, outer sep=0pt},
dot/.default = 5pt
]

\node [red, dot, label=left:$r$] (0,0) at (0,0) {};
\node [dot, label=right:$r+k$] (2,0) at (2,0) {};
\node [dot, label=right:$r+2k$] (4,0) at (4,0) {};
\node [dot, label=right:$r+3k$] (6,0) at (6,0) {};
\node [dot, label=right:$r+4k$] (8,0) at (8,0) {};
\node [blue, dot, label=right:$r+5k$] (10,0) at (10,0) {};

\node [dot, label=above:$r+a$] (1,1) at (1,1) {};
\node [dot, label=above:$(r+a)+k$] (3,1) at (3,1) {};
\node [dot, label=above:$(r+a)+2k$] (5,1) at (5,1) {};
\node [dot, label=above:$(r+a)+3k$] (7,1) at (7,1) {};
\node [dot, label=above:$(r+a)+4k$] (9,1) at (9,1) {};

\draw[-{Latex[scale=1.5]}] (0,0) to["$a$"] (1,1);
\draw[-{Latex[scale=1.5]}] (2,0) to["$a$"] (3,1);
\draw[-{Latex[scale=1.5]}] (4,0) to["$a$"] (5,1);
\draw[-{Latex[scale=1.5]}] (6,0) to["$a$"] (7,1);
\draw[-{Latex[scale=1.5]}] (8,0) to["$a$"] (9,1);

\draw[-{Latex[scale=1.5]}] (2,0) to["$b$" {above right}] (1,1);
\draw[-{Latex[scale=1.5]}] (4,0) to["$b$" {above right}] (3,1);
\draw[-{Latex[scale=1.5]}] (6,0) to["$b$" {above right}] (5,1);
\draw[-{Latex[scale=1.5]}] (8,0) to["$b$" {above right}] (7,1);
\draw[-{Latex[scale=1.5]}] (10,0) to["$b$" {above right}] (9,1);

\node [dot, label=below:$r+a'$] (1,-4) at (1,-4) {};
\node [dot, label=below:$(r+a')+k$] (3,-4) at (3,-4) {};
\node [dot, label=below:$(r+a')+2k$] (5,-4) at (5,-4) {};
\node [dot, label=below:$(r+a')+3k$] (7,-4) at (7,-4) {};
\node [dot, label=below:$(r+a')+4k$] (9,-4) at (9,-4) {};
\node [dot, label=below:$(r+a')+5k$] (11,-4) at (11,-4) {};

\node [dot] (2,-3) at (2,-3) {};
\node [dot] (4,-3) at (4,-3) {};
\node [dot] (6,-3) at (6,-3) {};
\node [dot] (8,-3) at (8,-3) {};
\node [dot] (10,-3) at (10,-3) {};

\draw[-{Latex[scale=1.5]}] (1,-4) to["$a$"] (2,-3);
\draw[-{Latex[scale=1.5]}] (3,-4) to["$a$"] (4,-3);
\draw[-{Latex[scale=1.5]}] (5,-4) to["$a$"] (6,-3);
\draw[-{Latex[scale=1.5]}] (7,-4) to["$a$"] (8,-3);
\draw[-{Latex[scale=1.5]}] (9,-4) to["$a$"] (10,-3);

\draw[-{Latex[scale=1.5]}] (3,-4) to["$b$" {above right}] (2,-3);
\draw[-{Latex[scale=1.5]}] (5,-4) to["$b$" {above right}] (4,-3);
\draw[-{Latex[scale=1.5]}] (7,-4) to["$b$" {above right}] (6,-3);
\draw[-{Latex[scale=1.5]}] (9,-4) to["$b$" {above right}] (8,-3);
\draw[-{Latex[scale=1.5]}] (11,-4) to["$b$" {above right}] (10,-3);

\draw[-{Latex[scale=1.5]}] (0,0) to["$a'$"] (1,-4);
\draw[-{Latex[scale=1.5]}] (2,0) to["$a'$"] (3,-4);
\draw[-{Latex[scale=1.5]}] (4,0) to["$a'$"] (5,-4);
\draw[-{Latex[scale=1.5]}] (6,0) to["$a'$"] (7,-4);
\draw[-{Latex[scale=1.5]}] (8,0) to["$a'$"] (9,-4);
\draw[-{Latex[scale=1.5]}] (10,0) to["$a'$"] (11,-4);

\draw[-{Latex[scale=1.5]}] (1,1) to["$a'$"] (2,-3);
\draw[-{Latex[scale=1.5]}] (3,1) to["$a'$"] (4,-3);
\draw[-{Latex[scale=1.5]}] (5,1) to["$a'$"] (6,-3);
\draw[-{Latex[scale=1.5]}] (7,1) to["$a'$"] (8,-3);
\draw[-{Latex[scale=1.5]}] (9,1) to["$a'$"] (10,-3);

\end{tikzpicture}
\caption{The figure shows a cop guarding a robber move in an abelian Cayley graph. The robber's vertex is labelled $r$, and each arc is labelled with its corresponding generating element. The values $a$ and $b$ are generating elements, and $a - b = k$. Here, a cop occupies $r + 5k$, so the difference between the cop and robber's positions is $5k$. If the robber plays $a$, then the cop will play $b$, and the difference between the cop and robber will decrease to $4k$. If the robber continues to play $a$, then the cop may continue to play $b$, decreasing the difference between the cop and robber's positions to $3k$, then $2k$, then $k$, and finally $0$. If the robber plays a different move $a'$, then the cop can also play $a'$ and maintain its difference with the robber. 
} 
\label{figurestrategy}
\end{figure}

Thus, when will say that a robber move is unsafe, we will mean that a cop is ``guarding" this robber move as described above, and the robber can only play this move finitely many times before being captured by the guarding cop. As in \cite{PB}, we will use the fact that when $T$ is large, one element $k$ can account for many elements of $T$. Figure \ref{figuremultiguard} shows a local structure that appears in abelian Cayley digraphs with one group element accounting for many generating elements. The figure gives some intuition for how a single group element accounting for many generating elements allows a single cop to guard many robber moves.

\begin{figure}[h]
\center
\begin{tikzpicture}[scale=1.75,
dot/.style = {circle, fill, minimum size=#1,
inner sep=0pt, outer sep=0pt},
dot/.default = 5pt
]

\node [dot, label=below left:$r$] (-0.3,0) at (-1,0) {};
\node [dot, label=below right:$r+k$] (3,0) at (3,0) {};

\node [dot] (1,-1.9) at (1,-1.9) {};
\node [dot] (1,-1.1) at (1,-1.1) {};
\node [dot] (1,-0.3) at (1,-0.3) {};
\node [dot] (1,0.3) at (1,0.3) {};
\node [dot] (1,1.1) at (1,1.1) {};
\node [dot] (1,1.9) at (1,1.9) {};

\draw[-{Latex[scale=1.5]}] (-1,0) to["$a_1$" {below=4pt}] (1,-1.9);
\draw[-{Latex[scale=1.5]}] (-1,0) to["$a_2$" {below=1pt}] (1,-1.1);
\draw[-{Latex[scale=1.5]}] (-1,0) to["$a_3$" {below}] (1,-0.3);
\draw[-{Latex[scale=1.5]}] (-1,0) to["$a_4$" {above}] (1,0.3);
\draw[-{Latex[scale=1.5]}] (-1,0) to["$a_5$" {above=1pt}] (1,1.1);
\draw[-{Latex[scale=1.5]}] (-1,0) to["$a_6$" {above=4pt}] (1,1.9);

\draw[-{Latex[scale=1.5]}] (3,0) to["$b_1$" {below=4pt}] (1,-1.9);
\draw[-{Latex[scale=1.5]}] (3,0) to["$b_2$" {below=1pt}] (1,-1.1);
\draw[-{Latex[scale=1.5]}] (3,0) to["$b_3$" {below}] (1,-0.3);
\draw[-{Latex[scale=1.5]}] (3,0) to["$b_4$" {above}] (1,0.3);
\draw[-{Latex[scale=1.5]}] (3,0) to["$b_5$" {above=1pt}] (1,1.1);
\draw[-{Latex[scale=1.5]}] (3,0) to["$b_6$" {above=4pt}] (1,1.9);

\end{tikzpicture}
\caption{The figure shows a subgraph of an abelian Cayley digraph $\Gamma$. The generating set of $\Gamma$ contains six generating elements $a_1, \dots, a_6$ and six generating elements $b_1, \dots, b_6$, satisfying $a_1 - b_1 = \dots = a_6 - b_6$. Therefore, the group element $k = a_1 - b_1$ accounts for all six generating elements $a_1, \dots, a_6$, and thus if the robber occupies the vertex $r$, a cop at $r+k$ can guard all moves $a_1, \dots, a_6$.}
\label{figuremultiguard}
\end{figure}

To avoid repeating the same conditions, we define the following notation.
\begin{defn}
    We define $$\mathcal G_d=\{(G,S,T) : G \text{ is a finite abelian group, } S \text{ is a generating set of $G$, }0_G\notin S,\textrm{ and }T\subseteq S\}$$
    and $$\mathcal G_u=\{(G,S,T)\in \mathcal G_d : S=-S\}.$$
     We also define $\mathcal D=\{(n,s,t)\in \mathbb N^3 : n\geq 1$ and $n-1\geq s\geq t\geq 0\}$. 
\end{defn}
The set of triples $\mathcal G_d$ corresponds to directed abelian Cayley graphs with specified robber movesets, and the set of triples $\mathcal G_u$ corresponds to undirected abelian Cayley graphs with specified robber movesets. The set $\mathcal D$ then includes all possible sizes for a triple in $\mathcal G_d$ or $\mathcal G_u$ (along with some unattainable triple sizes).

\newpage

\hspace{10mm}

We also define the following.

\begin{defn}
    Let $(n,s,t) \in \mathcal D$. 
    \begin{itemize}
        \item We define $c_d(n,s,t)$ as the maximum, over all triples $(G,S,T) \in \mathcal G_d$ of respective sizes $(n,s,t)$, of the number of cops required to capture a robber on $\cay(G,S)$ when the robber may only play moves in $T$.
        \item We define $c_u(n,s,t)$ as the maximum, over all triples $(G,S,T) \in \mathcal G_u$ of respective sizes $(n,s,t)$, of the number of cops required to capture a robber on $\cay(G,S)$ when the robber may only play moves in $T$.
      \end{itemize}
\end{defn}

Whenever there exists no triple $(G,S,T) \in \mathcal G_d$ of respective sizes $(n,s,t) \in \mathcal D$, we say that $c_d(n,s,t) = 1$. Similarly, whenever there exists no triple $(G,S,T) \in \mathcal G_u$ of respective sizes $(n,s,t)$ for some triple $(n,s,t) \in \mathcal D$, we say that $c_u(n,s,t) = 1$. Furthermore, $c_d(n,s,t)$ and $c_u(n,s,t)$ have a trivial upper bound of $n$. Thus, $c_d(n,s,t)$ and $c_u(n,s,t)$ are well-defined for all $(n,s,t) \in \mathcal D$.

Note that as $\mathcal G_u \subseteq \mathcal G_d$, it immediately follows that for any triple $(n,s,t) \in \mathcal D$, $c_u(n,s,t) \leq c_d(n,s,t)$. Furthermore, in a standard game of cops and robbers, the robber may choose any move in $S$ on each turn, and hence for a finite abelian group $G$ generated by set $S$, the cop number of $\cay(G,S)$ is at most $c_d(n,s,s)$ in general, and the cop number of $\cay(G,S)$ is at most $c_u(n,s,s)$ when $S = -S$.

For technical reasons which will become clear shortly, we also need to define the following.

\begin{defn}
      We define $\mathcal B = \{(n,s,t)\in \mathcal D : t=0 \text{ or } s=n-1\}$. We say that triples $(n,s,t) \in\mathcal B$ are \emph{boundary values.}
\end{defn}

In other words, boundary values give the sizes of triples $(G,S,T) \in \mathcal G_d$ for which determining the number of cops required to capture a robber on $\cay(G,S)$ with moveset $T$ is trivial, as we see in the following observation.

\begin{lem}\label{admissiblelemma}
If $(n,s,t) \in \mathcal B$, then $c_d(n,s,t)= c_u(n,s,t) =1$.
\end{lem}
\begin{proof}
For a triple $(n,s,t) \in \mathcal B$, let $(G,S,T) \in \mathcal G_d$ such that $(|G|,|S|,|T|) = (n,s,t)$. (If no such triple $(G,S,T)$ exists, then $c_d(n,s,t) = c_u(n,s,t) = 1$ by definition.) Consider a game of cops and robbers on $\cay(G,S)$ in which the robber's moveset is $T$.
\begin{itemize}
\item If $T = \emptyset$, then the robber has no moves, and a single cop may move to the robber's position and capture the robber.

\item If $|S| = |G| - 1$, then $\cay(G,S)$ is a complete digraph, and a single cop can capture the robber after one move.

\end{itemize}
Thus, we have shown that $c_d(n,s,t) = 1$. As $1 \leq c_u(n,s,t) \leq c_d(n,s,t)$, it also follows that $c_u(n,s,t) = 1$.
\end{proof}
We note that for a triple $(G,S,T) \in \mathcal G_d$ of respective sizes $(n,s,t)$, if $n \leq 2$, then $s = n-1$ must hold, so $(n,s,t) \in \mathcal B$. Therefore, when we consider values $(n,s,t) \in \mathcal D \setminus \mathcal B$, we may assume that $n \geq 3$.

We are now ready for our main tool, which will be the following lemma. This lemma essentially formalizes a general inductive strategy of capturing the robber by guarding robber moves until no robber move is safe. The lemma generalizes key ideas used by Frankl in \cite{FranklCayley} and uses the idea from \cite{PB} of having some elements of $S$ which account for many elements of $T$.

We first give an informal description of the lemma. We will have functions $g$ and $h$ taking values in $\mathcal D \setminus \mathcal B$. The function $h$ will give a lower bound for the number of robber moves that a single cop can make unsafe on an $n$-vertex $s$-regular abelian Cayley graph or digraph, when the robber's moveset is of size $t$. We will show that if $g$ satisfies certain properties that are necessary for the inductive strategy of guarding robber moves described above, then $g(n,s,t)$ gives an upper bound for the number of cops needed to capture a robber on such a graph.

\begin{lem}\label{mainlemma}
    Let $(\mathcal G,c)$ be either $(\mathcal G_d,c_d)$ or $(\mathcal G_u,c_u)$, and let $g:\mathcal D\setminus \mathcal B \rightarrow \R^{\geq 2}$ and $h: \mathcal D\setminus \mathcal B\rightarrow \R^{>0}$ be functions. Suppose that $g$ and $h$ respect the following conditions for all $(n,s,t)\in \mathcal D\setminus \mathcal B$:
	\begin{enumerate}
		\item For any $(G,S,T)\in \mathcal G$ with respective sizes $(n,s,t) \in \mathcal D \setminus \mathcal B$, there exists an element $k \in G \setminus \{0_G\}$ accounting for at least $h(n,s,t)$ elements of $T$ with respect to $S$;
		\item For $n'\leq \frac{n}{2}$, $s'\leq s$, $t'\leq t$, and $(n',s',t')\in \mathcal D\setminus \mathcal B$, either $g(n,s,t)\geq c(n',s',t')$, or $g(n,s,t)\geq g(n',s',t')$;
		\item If $1 \leq t'\leq t-h(n,s,t)$, then $g(n,s,t)\geq g(n,s,t')+1$.
	\end{enumerate}	
	
	 Then, if $(n,s,t)\in \mathcal D \setminus \mathcal B$, then $c(n,s,t)\leq g(n,s,t)$.
\end{lem}
\begin{proof}
We fix functions $g$ and $h$ that satisfy the conditions of the lemma.

Suppose that the lemma does not hold for some $(n,s,t) \in \mathcal D \setminus \mathcal B$. We choose our offending triple $(n,s,t)$ with $n$ as small as possible and, subject to $n$ being minimum, with $t$ as small as possible. As the lemma does not hold for $(n,s,t)$, we may choose $(G,S,T) \in \mathcal G$ with respective sizes $(n,s,t)$ so that $g(n,s,t)$ cops are not enough to capture a robber on $\cay(G,S)$, even when the robber may only play moves from $T$. We will show that this gives us a contradiction.

By condition (1), there exists an element $k \in G$, satisfying $k\neq 0_G$, that accounts for at least $h(n,s,t)$ elements of $T$. We would like to show that we can position a cop at a vertex $r + \gamma k$, where $r$ is the position of the robber, and $\gamma$ is some integer. In other words, we would like to show that we can capture the robber ``modulo $k$." To this end, we let $\phi: G \rightarrow G / \langle k \rangle$ be the natural homomorphism $a \mapsto a +  \langle k \rangle$. By the definition of $\phi$, placing a cop at such a vertex $r + \gamma k$ is equivalent to capturing the robber in a game of cops and robbers played on $G / \langle k \rangle$ with cop moveset $\phi(S)$ and robber moveset $\phi(T)$.

We first note that $(G / \langle k \rangle,\phi(S),\phi(T))\in \mathcal G$. In particular, if $S=-S$, then $\phi(S)=\phi(-S)=-\phi(S)$.

We now show that our $g(n,s,t)$ cops have a strategy to capture the robber in the game on $G / \langle k \rangle$. 
As $k \neq 0_G$, we see that $n'=|G / \langle k \rangle| \leq n/2$, $s'=|\phi(S)| \leq |S|=s$, and $t'=|\phi(T)| \leq |T|=t$. If $(n',s',t')\in \mathcal B$, then as shown previously, $c(n',s',t')=1 < g(n,s,t)$, and our $g(n,s,t)$ cops may capture the robber on $G / \langle k \rangle$. Otherwise, suppose that $(n',s',t')\in \mathcal D\setminus \mathcal B$. Then $c(n',s',t') \leq g(n,s,t)$ either directly from (2), or from the inequality $c(n',s',t') \leq g(n',s',t') \leq g(n,s,t)$, which follows from the fact that $(n,s,t)$ is a minimal counterexample. Therefore, our $g(n,s,t)$ cops have a strategy by which a cop $C$ can reach a vertex $r+\gamma k$ for some integer $\gamma \geq 0$, where $r \in G$ is the position of the robber. 

Next, we show that at this point, $C$ has a strategy to restrict the robber to a moveset of size at most $t - h(G,S,T)$. Let $A = \{a_1, \dots, a_m\} \subseteq T$ be the set of robber moves accounted for by $k$. If the robber plays a move $a' \not \in A$, then $C$ plays $a'$, and $C$ will stay at a vertex of the form $r+\gamma k$, where $r$ is the new position of the robber. If the robber plays a move $a_i \in A$, then $C$ has a move $b_i \in S\cup\{0_G\}$ such that $a_i-b_i = k$. After $C$ plays $b_i$, $C$ now occupies a vertex $r+(\gamma-1)k$, where $r$ is the new position of the robber. Thus we see that whenever the robber plays a move from $A$, which must be accounted for by $k$, the ``difference" between the robber and $C$ decreases by exactly $k$. Thus, if the robber plays a move from $A$ sufficiently many times ($\gamma$ times), then the robber will be caught by $C$. Therefore, the robber must eventually stop playing all moves of $A$. The number of moves in $A$ is at least $h(n,s,t)$, and hence $C$ restricts the robber to a moveset $T \setminus A$ of size at most $t - h(n,s,t)$.

We note that when applying the inductive strategy on the quotient graph $\cay(G / \langle k \rangle, \phi(S))$, it is still possible for the robber to play moves which are not considered safe, but only a bounded number of times. For example, in the paragraph above, we describe the move $a_i$ as unsafe, but the robber may play $a_i$ up to $\gamma - 1$ times without being captured. If the robber plays an ``unsafe" move, we pause the inductive strategy; then all cops playing the quotient strategy copy the robber's move, while the cops guarding this unsafe move advance closer to the robber.

Now, we show that it is possible for at most $g(n,s,t)-1$ cops to win in the game given by the triple $(G,S,T\setminus A)$. This will give us our contradiction, as we may then capture the robber in the game given by $(G,S,T)$ with $g(n,s,t)$ cops by using one cop to make the moves in $A$ unsafe for the robber and then using the remaining $g(n,s,t)-1$ cops to win in $(G,S,T\setminus A)$. If $(n,s,t - |A|)$ is a boundary value, then 1 cop is sufficient for the game given by $(G,S,T\setminus A)$, and then as $g(n,s,t)\geq 2$, we have our contradiction. Note that as $(n,s,t) \in \mathcal D \setminus \mathcal B$, $(n,s,t - |A|) \in \mathcal B$ if and only if $t - |A| = 0$. Otherwise, $t - |A| \geq 1$ and $(n,s,t - |A|)\in \mathcal D\setminus \mathcal B$, and by the minimality of $(n,s,t)$ and condition (3), $g(n,s,t - |A|)-1$ additional cops are sufficient to capture the robber in the game given by the triple $(G,S,T\setminus A)$. Therefore, in total, we need at most $g(n,s,t)$ cops to capture the robber, which contradicts the minimality of $(n,s,t)$. Thus, our proof is complete.
 \end{proof}

\section{Upper bound for undirected abelian Cayley graphs}\label{undirectedsection}
In this section, we will show that the approach we have outlined in Lemma \ref{mainlemma} gives us an upper bound of $\frac{1}{\sqrt{(\sqrt{2}-1) e}} \sqrt{n}+\frac{5}{2}\approx 0.9424\sqrt{n}+\frac{7}{2}$ on the cop number of undirected abelian Cayley graphs of $n$ vertices. As we consider undirected graphs in this section, whenever we have an abelian group $G$ generated by a set $S$, we will always require that $S = -S$. This way, we may consider $\cay(G,S)$ as an undirected abelian Cayley graph. Some of the symbolic and optimization computations in this section and the next were done with Mathematica \cite{Mathematica}, but with care and patience, each computation can be checked by hand.

Using our main tool of Lemma \ref{mainlemma}, we will aim to define functions $g$ and $h$ that satisfy its conditions for $(\mathcal 
G_u,c_u)$ and such that $g(n,s,s)$, which is an upper bound for cop number, is not too large. Therefore, the main challenge of this section will be choosing suitable functions $g$ and $h$, and moreover, showing that these functions satisfy all the conditions of Lemma \ref{mainlemma}. 

Before we seek our functions $g$ and $h$ with which to prove an upper bound on the cop number of abelian Cayley graphs, we first note that there is a simple choice of $g$ and $h$ that gives a slightly weaker version of Theorem \ref{thmFrankl}, as shown in the following proposition. This simple choice of the functions $g$ and $h$ gives an instructive example of how to use Lemma \ref{mainlemma}, and this application of Lemma \ref{mainlemma} furthermore shows that our lemma is indeed a generalization of the method of Frankl from \cite{FranklCayley}.

\begin{prop}\label{weakFrankl}
If $\Gamma$ is a Cayley graph on an abelian group with generating set $S$ such that $S=-S$ and $0_G\notin S$, then
$$c(\Gamma)\leq \left\lceil\frac{|S|+3}{2}\right\rceil.$$
\end{prop}

\begin{proof}
We wish to define functions $g$ and $ h$ taking values in $\mathcal D \setminus \mathcal B$ that satisfy the conditions of Lemma \ref{mainlemma} for $(\mathcal G_u,c_u)$. We choose
	$$g(n,s,t)=\left\lceil\frac{t+3}{2} \right\rceil;$$
$$h(n,s,t)=
\begin{cases}
    1 & t=1,2\\
    2 & t\geq 3.
\end{cases}
$$
We claim that $g$ and $h$ satisfy the conditions of Lemma \ref{mainlemma} for $(\mathcal G_u,c_u)$. We immediately notice that $h(n,s,t) > 0$ by definition, and for all $(n,s,t) \in \mathcal D \setminus \mathcal B$, we have $t \geq 1$, and hence $g(n,s,t) \geq 2$.

We show that condition (1) of Lemma \ref{mainlemma} is satisfied. Indeed, let $(G,S,T)\in \mathcal G_u$ with respective sizes $(n,s,t) \in \mathcal D \setminus \mathcal B$. If $t \in \{1,2\}$, then for any $a \in T$, we may let $k = a$; then, as $a - 0_G = k$, $k$ accounts for at least $1$ element of $T$ with respect to $S$, namely $a$. If $t \geq 3$, then we may choose any elements $a,b \in T$ with $a \neq -b$ and let $k = a + b$; then, as $a - (-b) = k$ and $b - (-a) = k$, $k$ accounts for at least two elements of $T$ with respect to $S$, namely $a$ and $b$. Hence, condition (1) of Lemma \ref{mainlemma} is satisfied.

Next, we show that condition (2) of Lemma \ref{mainlemma} is satisfied. If we have $n' \leq n/2$, $s' \leq s$, and $t' \leq t$ such that $(n',s',t')  \in \mathcal D \setminus \mathcal B$, then $$g(n,s,t) = \left\lceil\frac{t+3}{2} \right\rceil \geq \left\lceil\frac{t'+3}{2} \right\rceil = g(n',s',t').$$

Finally, we show that condition (3) of Lemma \ref{mainlemma} is satisfied. Suppose $1 \leq t'\leq t-h(n,s,t)$. As $h(n,s,t) \geq 1$, we must have $t \geq 2$. If $t = 2$, then $t' = 1$, and $g(n,s,t) = 3 = 2+1 = g(n,s,t') + 1$. Otherwise, if $t \geq 3$, then $h(n,s,t) = 2$. Thus, if $1 \leq t' \leq t - 2$, then 
$$g(n,s,t) =  \left \lceil \frac{t + 3}{2} \right \rceil \geq \left \lceil \frac{t' + 3}{2} \right \rceil + 1 = g(n,s,t') + 1.$$

Hence, as $g$ and $h$ satisfy the conditions of Lemma \ref{mainlemma}, it follows that $c_u(n,s,t) \leq g(n,s,t)$ for all $(n,s,t) \in \mathcal D\setminus \mathcal B$. Therefore, for an abelian group $G$ of $n$ elements generated by a set $S \subseteq G$ of $s$ elements satisfying $S = -S$, if $\Gamma = \cay(G,S)$, one of the following holds: either $(n,s,s) \in \mathcal B$ and $c(\Gamma) \leq c_u(n,s,s) = 1 < \lceil \frac{s+3}{2}\rceil$, or $(n,s,s) \in \mathcal D \setminus \mathcal B$ and $c(\Gamma) \leq c_u(n,s,s) \leq g(n,s,s) = \lceil \frac{s+3}{2}\rceil$. 
\end{proof}

In fact, by defining boundary values more carefully, it is possible to obtain the exact result of Theorem \ref{thmFrankl} from Lemma \ref{mainlemma}. More precisely, if we add the inductive base cases for Frankl's proof from from \cite{FranklCayley} as boundary values in the undirected case, then we may use Lemma \ref{mainlemma} to give the exact same result as Theorem \ref{thmFrankl}. However, this modification requires that boundary values be defined separately for the directed and undirected cases, and it adds to the already existing technicalities, so we opt for a simpler presentation with a slightly worse additive constant.

In the proof of Theorem \ref{thmFrankl}, we let a single element of $G$ account for at most two elements of $T$. As discussed earlier, we will see that in general, a single element can account for many more than two elements of $T$. This will allow us to use a similar strategy to find an improved upper bound for the cop number of an abelian Cayley graph. 

For the remainder of this section, our goal will be to establish a sharper upper bound on the cop number of an undirected Cayley graph on an abelian group. The main tool for our improved upper bound will be Lemma \ref{mainlemma}, so as discussed before, we will seek functions $g$ and $h$ that we can use with Lemma \ref{mainlemma}. 
In the following definition, we define a function $h$ that we will use for the entire remainder of this section. In the definition of $h$, we will use a fixed constant $c > 0$. We will assign a value to $c$ later.

\begin{defn}
    	We define the function $h:\mathcal D\setminus \mathcal B\rightarrow \R^{> 0}$ by
	$$h(n,s,t)=
\begin{cases}
1&t\in\{1,2\}\ \textrm{ and } t\leq c\sqrt{n}\\
2&3 \leq t\leq c\sqrt{n}\\
\frac{ts}{n-1}&t>c\sqrt{n}.
\end{cases}$$
\end{defn}

We note that it is important to add that $t\leq c\sqrt{n}$ in the first condition. We will sometimes choose $c$ to be as low as about $0.8$, so when $n$ is small, it is possible that $c \sqrt{n} < 2$.

\begin{lem}\label{undirectedfunction}
The function $h$ satisfies condition (1) of Lemma \ref{mainlemma} for $(\mathcal G_u, c_u)$.
\end{lem}

\begin{proof}
	We must show that if $(G,S,T) \in \mathcal C_u$ is a triple with respective sizes $(n,s,t) \in \mathcal D \setminus \mathcal B$, then there exists an element $k \in G \setminus \{0_G\}$ accounting for at least $h(n,s,t)$ elements of $T$ with respect to $S$.

    Suppose that $t \leq c \sqrt{n}$. Then $h$ is defined as in the proof of Theorem \ref{weakFrankl} given above, and the statement follows from the same argument.
    
    Suppose, on the other hand, that $t>c\sqrt{n}$.
	 We compute a multiset $M$ consisting of all differences $a_i - a_j$, for $a_i \neq a_j$, $a_i\in T$, and $a_j\in S\cup\{0_G\}$. Let $k$ be a most frequently appearing element of $M$. There are $t$ possible choices for $a_i$, and there are $s$ possible choices for each $a_j$, namely $0_G$ and every element of $S \setminus \{a_i\}$.
	 
	 By the pigeonhole principle, as each element of $M$ is one of $n-1$ possible values, the most commonly occurring element $k$ of $M$ must appear at least $\frac{ts}{n-1}$ times. Therefore, $k$ must account for at least $\frac{ts}{n-1}$ elements of $T$ with respect to $S$. As $k \neq 0_G$, the statement again holds.
\end{proof}
	
	The cutoff at $c \sqrt{n}$ in the definition of $h$ is analogous to the cutoff in the Pairing Algorithm of \cite{PB}. The idea behind this cutoff is the fact that when $t$ is small, the quantity $\frac{ts}{n-1}$ becomes smaller than $2$, and then it is preferable to argue directly that there exists an element of $G \setminus \{0_G\}$ accounting for two elements of $T$. As discussed in Section \ref{furtherdirections}, we could, of course, take the ceiling of this function when applying the pigeonhole principle, but this makes analysis of the function $h$ very difficult. One might also be interested in modifying this cutoff to be of the form $t>c \frac{n-1}{s}$ (for some constant $c\geq 1$) in order for $h$ always to be at least 1. For some triples $(n,s,t)$, this would allow $h(n,s,t)$ to become larger while still satisfying condition (1). However, this alternative cutoff of $h$ does not appear to behave nicely when it comes to verifying condition (2).
	
	 Next, we will define our function $g$. Our goal for the remainder of this section will then be to show that $g$ and $h$ satisfy the conditions of Lemma \ref{mainlemma} for $(\mathcal G_u, c_u)$ and that $g(n,s,s)$ is not too large. 
	
	\begin{defn}
	We define the function $g:\mathcal D\setminus \mathcal B\rightarrow \R^{\geq 2}$ by
	
$$g(n,s,t)=\begin{cases}
\left\lceil\frac{t+3}{2} \right\rceil & 1\leq t\leq c\sqrt{n} \\
\frac{\log \frac{t}{c\sqrt n}}{\log \frac{n-1}{n-s-1}}+\frac{c\sqrt n}{2} +\frac{7}{2}   &    t>c\sqrt{n}.
\end{cases}$$ 
\end{defn}

This choice of $g$ may not seem straightforward, so we present the intuition behind this definition. We suppose that for a value $(n,s,t) \in \mathcal D\setminus \mathcal B$, we have an abelian group $G$ on $n$ elements generated by a set $S \subseteq G$  (with $S= -S$) of $s$ elements, and a subset $T \subseteq S$ of $t$ elements. We would like to estimate the number of elements of $G$ needed to form a set $K$ such that the elements of $K$ altogether account for each element of $T$, since, as we have discussed, this will help us count the number of cops needed to make every robber move unsafe in a game on $\cay(G,S)$.

In order to estimate the number of elements needed in $K$, we may construct $K$ iteratively. The iterative construction that we describe here is a refinement of the Pairing Algorithm from \cite{PB}. If $t \leq c\sqrt{n}$, then we may pair the elements of $T$ by the method of Proposition \ref{weakFrankl} and obtain a set $K$ of at most $\lceil \frac{t+3}{2} \rceil$ elements, for which the elements of $K$ altogether account for all of $T$ with respect to $S$. On the other hand, if $t > c\sqrt{n}$, we may choose one element $k \in G$ to account for at least $\frac{ts}{n-1}$ elements of $T$, as in the proof of Lemma \ref{undirectedfunction}. More generally, we can use the same idea to define a recursive process that repeatedly adds elements to $K$, and we may run this process until at most $c\sqrt{n}$ elements of $T$ are not accounted for by $K$. We execute our recursive process as follows. We define $z_i$ to be the number of elements accounted for by $K$ after $i$ iterations of our process. We immediately see that $z_0=0$, and if we choose $k$ as described earlier during the first iteration of our process, we may let $z_1 \geq \frac{ts}{n-1}$. Additionally, given $z_{i-1}$, there are $t-z_{i-1}$ elements of $T$ not accounted for by $K$, and hence on the $i$th iteration of our procedure, we may add an element to $K$ that accounts for $\frac{s(t-z_{i-1})}{n-1}$ new elements of $T$. Therefore, we obtain a recursive inequality for the number of elements in $T$ accounted for by $K$ after $i$ iterations of our procedure: $$z_i\geq z_{i-1}+\frac{s(t-z_{i-1})}{n-1}=\frac{n-s-1}{n-1}z_{i-1}+\frac{st}{n-1},$$
which has a closed form of
$$z_i\geq t-t \left(\frac{n-s-1}{n-1}\right)^i.$$

Hence, after $i$ iterations of our recursive procedure, we calculate that there are at most $t \left(\frac{n-s-1}{n-1}\right)^i$ elements of $T$ not accounted for by $K$. As soon as the number of elements in $T$ not accounted for by $K$ is at most $c\sqrt{n}$, we may pair the remaining elements of $T$ as in the proof of Proposition \ref{weakFrankl}. Therefore, the recursive method we have described will run $i$ times, where $i$ is the smallest integer such that $t \left(\frac{n-s-1}{n-1}\right)^i\leq c\sqrt{n}$. We thus may calculate that 
$$i =\left\lceil\frac{\log\frac{t}{c\sqrt{n}}}{\log\frac{n-1}{n-s-1}}\right\rceil$$
and hence after the recursive method runs $i$ times, at most $c \sqrt{n}$ elements of $T$ will be left unaccounted for by $K$. At this point, the remaining $c \sqrt{n}$ unaccounted elements of $T$ may be paired into sums, as in the method of Theorem \ref{weakFrankl}, and each such sum roughly accounts for $2$ elements of $T$. We may put these sums into $K$, at which point the elements of $K$ altogether account for all of $T$. In total, our count shows that our set $K$ needs roughly at most 
$$\left\lceil \frac{\log\frac{t}{c\sqrt{n}}}{\log\frac{n-1}{n-s-1}} \right\rceil + \frac{c\sqrt{n}}{2}$$
elements. This counting method gives us an intuition with which we define the function $g$. The extra additive constant of $g$ is included for technical reasons that will become clear later.

We easily see that our function $g$ is defined on all values in $\mathcal D \setminus B$ and bounded below by $2$. In the following lemmas, we will bound $g(n,s,t)$ above, and we will show that $g$ and $h$ satisfy conditions (2) and (3) of Lemma \ref{mainlemma}.

\begin{lem}\label{undirectedbound}
	Let $d > 0$. If $d\geq \frac{1}{ce}+\frac{c}{2}$, then $g(n,s,t)\leq d\sqrt{n}+\frac{7}{2}$.
\end{lem}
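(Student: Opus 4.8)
The plan is to verify the inequality case by case, following the three-part definition of $g^*(n,s,t)$. When $s=n-1$ we have $g^*(n,s,t)=1\leq d\sqrt n+\frac52$ trivially. When $s<n-1$ and $t\leq c\sqrt n$, we have $g^*(n,s,t)=\lceil\frac{t+1}{2}\rceil\leq \frac t2+1\leq \frac{c\sqrt n}{2}+1$; since $d\geq \frac{1}{ce}+\frac c2\geq \frac c2$ (as $c>0$), this is at most $\frac c2\sqrt n+\frac52\leq d\sqrt n+\frac52$. Thus the whole content of the lemma lies in the remaining case $s<n-1$, $t>c\sqrt n$.

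In that case $g^*(n,s,t)=\frac{\log\frac{t}{c\sqrt n}}{\log\frac{n-1}{n-s-1}}+\frac{c\sqrt n}{2}+\frac52$, so it is enough to prove
$$\frac{\log(t/(c\sqrt n))}{\log((n-1)/(n-s-1))}\leq \frac{\sqrt n}{ce},$$
since then the whole expression is at most $\left(\frac{1}{ce}+\frac c2\right)\sqrt n+\frac52\leq d\sqrt n+\frac52$. To bound the left-hand side, I would first note that $t\leq s$ and $\log$ is increasing, so the ratio is largest when $t=s$, and it suffices to treat $t=s$ (the numerator stays positive, as $s\geq t>c\sqrt n$). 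Next I would lower-bound the denominator: writing $\frac{n-1}{n-s-1}=1/\!\left(1-\frac{s}{n-1}\right)$ and applying $-\log(1-u)\geq u$ for $u\in[0,1)$ (valid since $s\leq n-2$ forces $\frac{s}{n-1}<1$) gives $\log\frac{n-1}{n-s-1}\geq \frac{s}{n-1}$, hence
$$\frac{\log(s/(c\sqrt n))}{\log((n-1)/(n-s-1))}\leq \frac{(n-1)\log(s/(c\sqrt n))}{s}.$$

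Finally I would maximize $\varphi(s)=\frac{\log(s/(c\sqrt n))}{s}$ over $s>0$: a one-variable computation gives $\varphi'(s)=\frac{1-\log(s/(c\sqrt n))}{s^2}$, so the maximum is attained at $s=ce\sqrt n$ with value $\varphi(ce\sqrt n)=\frac{1}{ce\sqrt n}$. Therefore $\frac{(n-1)\log(s/(c\sqrt n))}{s}\leq \frac{n-1}{ce\sqrt n}\leq \frac{\sqrt n}{ce}$, which is exactly the estimate needed. The only step carrying any subtlety is this last optimization, together with checking that the monotonicity and positivity conditions used to replace $t$ by $s$ and to substitute the lower bound for the denominator all point in the right direction; everything else is routine bookkeeping. (Throughout, $\log$ denotes the natural logarithm, consistent with the appearance of $e$ in the hypothesis.)
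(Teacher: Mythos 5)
Your proof is correct, and in the main case ($s<n-1$, $t>c\sqrt n$) it takes a genuinely different and more elementary route than the paper. Both arguments reduce to showing $\frac{\log(s/(c\sqrt n))}{\log((n-1)/(n-s-1))}\leq \frac{\sqrt n}{ce}$, but the paper attacks the exact expression: it rewrites the inequality as $1\leq \frac{c\sqrt n}{s}\bigl(\frac{n-1}{n-s-1}\bigr)^{\sqrt n/(ce)}$, locates the minimizing $s$ by differentiation, and then needs the auxiliary inequality $(1+\tfrac{x}{y})^y>e^{xy/(x+y)}$ together with a monotonicity-plus-limit argument to verify that the minimum is at least $1$. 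You instead first weaken the denominator via $-\log(1-u)\geq u$ (legitimately, since the numerator is positive because $s\geq t>c\sqrt n$), which reduces the problem to maximizing $\frac{\log(s/(c\sqrt n))}{s}$ — a one-line calculus exercise with exact maximum $\frac{1}{ce\sqrt n}$ at $s=ce\sqrt n$. Your version avoids the exponential inequality and the limit computation entirely, at no cost in the final constant; what the paper's sharper handling of the denominator buys is nothing here, since both arrive at the same bound $\frac{\sqrt n}{ce}$. Incidentally, your treatment of the case $t\leq c\sqrt n$ (bounding $\lceil\frac{t+1}{2}\rceil$ by $\frac{c}{2}\sqrt n+\frac{5}{2}$ and using $d\geq\frac{c}{2}$) is actually more careful than the paper's, which writes the looser bound $c\sqrt n+\frac{5}{2}$ even though $d<c$ for the parameters ultimately chosen; your reading is the one that makes the lemma go through as stated.
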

\begin{proof}
	We consider two cases :
	\begin{enumerate}
		\item If $t\leq c\sqrt{n}$, then
			$$g(n,s,t)=\left\lceil \frac{t+3}{2}\right\rceil\leq \left\lceil \frac{c\sqrt{n}+3}{2}\right\rceil\leq \frac{c}{2}\sqrt{n}+\frac{5}{2} < d\sqrt{n}+\frac{7}{2}.$$
		\item  If $t>c\sqrt{n}$, then we first note that $g(n,s,t)\leq g(n,s,s)$. We wish to find $\alpha$ such that $\frac{\log \frac{s}{c\sqrt n}}{\log \frac{n-1}{n-s-1}}\leq \alpha \sqrt{n}$ for all real values $n \geq 3$ and $1 \leq s \leq n - 2$.
		This inequality can be rewritten as
\begin{equation} \tag{$*$}
\label{inequality}
 r_{\alpha,c}(n,s):= \frac{c\sqrt{n}}{s}\left(\frac{n-1}{n-s-1}\right)^{\alpha \sqrt{n}} \geq 1.
		\end{equation}

		One calculates that the derivative relative to $s$ is
		$$\frac{\partial r_{\alpha,c}}{\partial s}=-\frac{c \sqrt{n} \left(\frac{n-1}{n-s-1}\right)^{\alpha \sqrt{n}+1} \left(-\alpha \sqrt{n} s+n-s-1\right)}{(n-1) s^2}.$$
		
		By examining the sign of this derivative, we see that $r_{\alpha,c}(n,s)$ achieves a minimum when $s$ has a value $s^*=\frac{n-1}{\alpha \sqrt{n}+1}$.
		Therefore, it will suffice to choose a value $\alpha$ such that the inequality (\ref{inequality}) holds when $s$ is replaced by $s^*$. We will show that the choice $\alpha=\frac{1}{c e}$ works. 

		By substituting $s$ with $s^* = \frac{n-1}{\alpha \sqrt{n}+1}$, applying $\alpha = \frac{1}{ce}$, and doing some simplification, we find that 
		$$w_c(n):=r_{\frac{1}{ce},c}(n,s^*)=\frac{ \left(1 + \frac{ce}{\sqrt{n}}\right)^{\frac{\sqrt{n}}{ce}} \sqrt{n} \left(ce+\sqrt{n}\right)}{e (n-1)}.$$
		In order to show that the inequality (\ref{inequality}) holds with $\alpha = \frac{1}{ce}$, it will be enough to show that $w_c(n) \geq 1$.

		We will apply the inequality $(1+\frac{x}{y})^y>e^{\frac{xy}{x+y}}$ (for $x,y>0$) \cite{inequalitylist} \cite[Section 5.3]{Kuang} with $x = \frac{1}{\sqrt{n}}$, $y = \frac{1}{ce}$, along with the inequality $\frac{\sqrt{n}}{n-1} > \frac{1}{\sqrt{n}}$. With these two inequalities, we find
		\begin{align*}
		    w_c(n)&=\frac{ \left(1+\frac{ce}{\sqrt{n}}\right)^{\frac{\sqrt{n}}{ce}} \sqrt{n} \left(ce+\sqrt{n}\right)}{e (n-1)}
		    >\frac{ e^{\frac{\sqrt{n}}{c e+\sqrt{n}}} \sqrt{n} \left(ce+\sqrt{n}\right)}{e (n-1)} >  \frac{ e^{\frac{-ce}{c e+\sqrt{n}}} \left(ce+\sqrt{n}\right)}{ \sqrt{n}}=:z_c(n).
		\end{align*}
		
		Furthermore, one calculates that
		$$z_c'(n)=-\frac{c^2 e^{2-\frac{ce}{ce+\sqrt{n}}}}{2 \left(ce n^{3/2}+n^2\right)}$$
		which is always negative, and that
		$$\lim_{n\rightarrow\infty} z_c(n)=1.$$

		Therefore, $w_c(n) > z_c(n) >  1$, which confirms that $\frac{\log \frac{s}{c\sqrt n}}{\log \frac{n-1}{n-s-1}}\leq \alpha \sqrt{n}$ when $\alpha = \frac{1}{ce}$, for all real values $c > 0$, $n \geq 3$, $1 \leq s \leq n-2$. Thus, we have
		$$g(n,s,t)=\frac{\log \frac{t}{c\sqrt n}}{\log \frac{n-1}{n-s-1}}+\frac{c\sqrt n}{2}+\frac{7}{2}\leq \left(\frac{1}{ce}+\frac{c}{2}\right)\sqrt{n}+\frac{7}{2}\leq d\sqrt{n}+\frac{7}{2}.$$
	\end{enumerate}
\end{proof}

\begin{lem}\label{undirectedcondition2}
	Let $d > 0$. If 
	$\frac{c}{2}\geq \frac{d}{\sqrt 2}$ and $d\geq \frac{1}{ce}+\frac{c}{2}$, then
	$g$ respects condition (2) of Lemma \ref{mainlemma} for $(\mathcal G_u, c_u)$.
\end{lem}

\begin{proof}
	Consider a choice of $d>0$ such that $\frac{c}{2}\geq \frac{d}{\sqrt 2}$ and $d\geq \frac{1}{ce}+\frac{c}{2}$. Let $(n,s,t)\in \mathcal D\setminus \mathcal B$ and $(n',s',t')\in \mathcal D \setminus \mathcal B$ such that $n'\leq \frac{n}{2}$, $s'\leq s$ and $t'\leq t$. We consider two cases:
	\begin{enumerate}
		\item If $t\leq c\sqrt n$, then $$g(n,s,t)=\left\lceil\frac{t+3}{2}\right\rceil\geq \left\lceil\frac{t'+3}{2}\right\rceil\geq c_u(n',s',t'),$$
		using a result from the proof of Proposition \ref{weakFrankl}.
		\item If $t>c\sqrt{n}$, then by the previous lemma and our hypotheses on $c$ and $d$,
		\begin{align*}
			 g(n,s,t)=\frac{\log \frac{t}{c\sqrt n}}{\log \frac{n-1}{n-s-1}}+\frac{c\sqrt n}{2} + \frac{7}{2}>  \frac{c\sqrt{n}}{2}+\frac{7}{2} \geq  d\frac{\sqrt{n}}{\sqrt{2}}+\frac{7}{2}\geq  d\sqrt{n'}+\frac{7}{2}\geq g(n',s',t').
		\end{align*}
	\end{enumerate}
\end{proof}

\begin{lem}\label{undirectedcondition3}
	The functions $g$ and $h$ respect condition (3) of Lemma \ref{mainlemma} for $(\mathcal G_u, c_u)$.
\end{lem} 

\begin{proof}
Let $(n,s,t)\in \mathcal D\setminus \mathcal B$ and $1\leq t'\leq t-h(n,s,t)$.

We consider four cases :
	\begin{enumerate}
	    \item If $t\leq 2$ and $t\leq c\sqrt{n}$, then as $h(n,s,t)=1$ we can see that the only case to consider is $t=2$ and $t'=1$. In this case, $g(n,s,t)=3$ and $g(n,s,t')=2$, so $g(n,s,t) = g(n,s,t') + 1$.
		\item If $3 \leq t\leq c\sqrt{n}$, then $h(n,s,t)= 2$, and thus $t\geq t'+2$. Then,
		$$g(n,s,t)=\left\lceil\frac{t+3}{2}\right\rceil\geq \left\lceil\frac{t'+3}{2}\right\rceil=\left\lceil\frac{t'+1}{2}\right\rceil+1= g(n,s,t') + 1$$
		\item If $t> c\sqrt{n}$ and $t'\leq  c\sqrt{n}$, then
			\begin{align*}
				g(n,s,t)=\frac{\log \frac{t}{c\sqrt n}}{\log \frac{n-1}{n-s-1}}+\frac{c\sqrt n}{2} + \frac{7}{2} >  \frac{c\sqrt n+3}{2}+2 > \left\lceil \frac{c\sqrt n+3}{2}\right\rceil+1\geq \left\lceil \frac{t'+3}{2}\right\rceil+1 \geq g(n,s,t') + 1
			\end{align*}
		\item If $t,t'>c\sqrt{n}$, we know from Lemma \ref{undirectedfunction} that $t'\leq t-\frac{ts}{n-1}=t\left(\frac{n-s-1}{n-1}\right)$. Thus,
				$$
				g(n,s,t)=\frac{\log \frac{t}{c\sqrt n}}{\log \frac{n-1}{n-s-1}}  + \frac{c\sqrt n}{2} + \frac{7}{2}  \geq \frac{\log \left(\frac{t'}{c\sqrt n}\cdot  \frac{n-1}{n-s-1}\right)}{\log \frac{n-1}{n-s-1}}  + \frac{c\sqrt n}{2} + \frac{7}{2}=\frac{\log \left(\frac{t'}{c\sqrt n} \right)}{\log \frac{n-1}{n-s-1}}+1  + \frac{c\sqrt n}{2} + \frac{7}{2}=g(n,s,t')+1.$$ 
	\end{enumerate}
	In all cases, $g(n,s,t) \geq g(n,s,t') + 1$, so condition (3) of Lemma \ref{mainlemma} is satisfied for $(\mathcal G_u,c_u)$.
\end{proof}

We have shown that $g$ and $h$ satisfy the conditions of Lemma \ref{mainlemma}, so we are ready for our main result for undirected Cayley graphs on abelian groups.
\begin{thm}\label{undirectedupperbound}
	The cop number of any undirected Cayley graph on an abelian group of $n$ elements is at most $\frac{1}{\sqrt{(\sqrt{2}-1) e}} \sqrt{n}+\frac{7}{2}\approx 0.9424\sqrt{n}+\frac{7}{2}$.
\end{thm}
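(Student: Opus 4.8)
The plan is to obtain Theorem~\ref{undirectedupperbound} by feeding the functions $g(G,S,T)=g^*(|G|,|S|,|T|)$ and the function $h$ of Lemma~\ref{undirectedfunction} into Lemma~\ref{mainlemma}, for a suitable choice of the constants $c,d>0$. Here the graph is $\cay(G,S)$ with $0_G\notin S=-S$, $\langle S\rangle=G$, so we are in the undirected setting of this section. All three hypotheses of Lemma~\ref{mainlemma} have already been verified in the preceding lemmas: condition~(1) is exactly Lemma~\ref{undirectedfunction}, condition~(2) is Lemma~\ref{undirectedcondition1} (which holds whenever $\tfrac c2\ge\tfrac d{\sqrt2}$ and $d\ge\tfrac1{ce}+\tfrac c2$), and condition~(3) is Lemma~\ref{undirectedcondition2}; moreover $g$ was observed to be an undirected admissible function. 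Thus, as soon as we fix $c,d>0$ satisfying the two inequalities $\tfrac c2\ge\tfrac d{\sqrt2}$ and $d\ge\tfrac1{ce}+\tfrac c2$, Lemma~\ref{mainlemma} gives $c(G,S,T)\le g^*(|G|,|S|,|T|)$ for every admissible triple $(G,S,T)$.

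What remains is to choose $c$ and $d$ so as to make the final bound as strong as possible. The constraints are $c\ge d\sqrt2$ and $d\ge\tfrac1{ce}+\tfrac c2$, and we seek the smallest $d$ for which some $c$ satisfies both. The unconstrained minimum of $\tfrac1{ce}+\tfrac c2$ is attained at $c=\sqrt{2/e}$ with value $\sqrt{2/e}\approx0.858$; since using this $c$ would force $d\ge\sqrt{2/e}$ and hence $c\ge d\sqrt2\ge 2/\sqrt e>\sqrt{2/e}$, a contradiction, the constraint $c\ge d\sqrt2$ must be tight at the optimum, so I set $c=d\sqrt2$. Substituting into $d\ge\tfrac1{ce}+\tfrac c2$ gives $d\ge\tfrac1{d\sqrt2\,e}+\tfrac d{\sqrt2}$, equivalently $d^2\ge\frac1{e(\sqrt2-1)}=\frac{\sqrt2+1}{e}$. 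Taking equality yields
\[
d=\frac1{\sqrt{e(\sqrt2-1)}}=\frac1{\sqrt{\sqrt2\,e-e}}\approx0.9424,\qquad c=d\sqrt2=\sqrt{\tfrac{2(\sqrt2+1)}{e}}\approx1.333 ,
\]
and one checks directly that with these values $\tfrac c2=\tfrac d{\sqrt2}$ and $d=\tfrac1{ce}+\tfrac c2$, so the hypotheses of Lemmas~\ref{undirectedcondition1} and~\ref{undirectedbound} are met.

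With $c,d$ so chosen, the conclusion is immediate: Lemma~\ref{mainlemma} applied to the triple $(G,S,S)$ gives $c(\cay(G,S))=c(G,S,S)\le g^*(n,s,s)$ with $n=|G|$ and $s=|S|$, and then Lemma~\ref{undirectedbound} (whose hypothesis $d\ge\tfrac1{ce}+\tfrac c2$ we have arranged) gives $g^*(n,s,s)\le d\sqrt n+\tfrac52=\frac1{\sqrt{\sqrt2\,e-e}}\sqrt n+\tfrac52$, which is the claimed bound.

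I expect the only genuine content here to be the short optimization in the second paragraph: recognizing that at the optimal parameters the constraint $c\ge d\sqrt2$ binds (so that the relevant quantity is $\tfrac1{d\sqrt2\,e}+\tfrac d{\sqrt2}$ rather than the unconstrained minimum of $\tfrac1{ce}+\tfrac c2$), and then extracting the algebraic value $d=\frac1{\sqrt{\sqrt2\,e-e}}$. Everything else is bookkeeping, assembling Lemmas~\ref{undirectedfunction}, \ref{undirectedbound}, \ref{undirectedcondition1} and~\ref{undirectedcondition2} through Lemma~\ref{mainlemma}.
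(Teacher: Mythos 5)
Your proposal is correct and follows essentially the same route as the paper: verify the hypotheses of Lemma~\ref{mainlemma} via Lemmas~\ref{undirectedfunction}, \ref{undirectedcondition1}, \ref{undirectedcondition2}, then optimize $d$ subject to $c\ge d\sqrt2$ and $d\ge\frac1{ce}+\frac c2$ and apply Lemma~\ref{undirectedbound}. Your explicit justification that the constraint $c\ge d\sqrt2$ binds at the optimum, yielding $d=\frac1{\sqrt{\sqrt2\,e-e}}$ and $c=d\sqrt2=\sqrt{\frac{2}{\sqrt2\,e-e}}$, is exactly the ``computation'' the paper leaves implicit.
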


\begin{proof}
Let $G$ be an abelian group on $n$ vertices generated by set $S \subseteq G$ (with $S = -S$ and $ 0_G\notin S$) of $s$ elements. If $(n,s,s)\in \mathcal B$, then the result follows directly from Lemma \ref{admissiblelemma}. Otherwise, we assume that $(n,s,s)\in \mathcal D\setminus B$.

We first find values $c$ and $d$ satisfying $\frac{c}{2}\geq \frac{d}{\sqrt 2}$ and $d\geq \frac{1}{ce}+\frac{c}{2}$, which minimize $d$. A simple computation of such values $c$ and $d$ yields $c=\sqrt{\frac{2}{e(\sqrt{2} -1)}}\approx 1.33$ and $d=\sqrt{\frac{1}{e(\sqrt{2} -1)}} \approx 0.9424$.

With these chosen values of $c$ and $d$, the lemmas of this section show that our choices for $g$ and $h$ satisfy all three conditions of Lemma \ref{mainlemma} for the case $(\mathcal G_u, c_u)$. Hence, by Lemmas \ref{mainlemma} and \ref{undirectedbound}, $c(\cay(G,S)) \leq c_u(n,s,s) \leq g(n,s,s)\leq d\sqrt{n}+\frac{7}{2}$.
\end{proof}

Similarly to Proposition \ref{weakFrankl}, the additive constant of Theorem \ref{undirectedupperbound} can be improved by $1$ to $\frac{5}{2}$ with a more technical definition of boundary values. However, we do not feel that this slight improvement justifies the added technicalities.

We note that Theorem \ref{undirectedupperbound} not only proves that Meyniel's conjecture holds for undirected Cayley graphs on abelian groups (with a smaller multiplicative constant than in \cite{PB}), but it also proves that Meyniel's conjectured bound holds for these graphs with a coefficient of $\sqrt{n}$ smaller than $1$. Indeed, Wagner has conjectured in \cite{wagner} that the coefficient of $\sqrt{n}$ in Meyniel's conjecture should be $1$, so Theorem \ref{undirectedupperbound} shows that abelian Cayley graphs satisfy both the conjectured upper bounds of Meyniel and Wagner.

In the next proposition, we show that we may obtain marginal improvements on the coefficient of $\sqrt{n}$ by considering the group structure of $G$. The proposition uses the fact that for a group $G$ of $n$ elements such that the smallest prime divisor of $n$ is a prime $p$, no element of prime order $q < p$ exists in $G$, and moreover, by Lagrange's Theorem and prime factorization, no such element exists in any subgroup or quotient group of $G$.

\begin{prop}\label{undirectedprime}
	Let $G$ be an abelian group on $n$ of vertices, and let $S \subseteq G$ be a generating set of $G$ such that $S = -S$ and $0_G\notin S$. Let $p$ be the smallest prime factor of $n$.
	\begin{enumerate}
		\item If $p=3$, then $c(\cay(G,S))\leq \sqrt{\frac{3}{2 \left(\sqrt{3}-1\right) e}}\sqrt{n}+\frac{7}{2}\approx 0.8682\sqrt{n}+\frac{7}{2}$.
		\item If $p\geq 5$, then $c(\cay(G,S))\leq\sqrt{\frac{2}{e}}\sqrt{n} +\frac{7}{2}\approx 0.8578\sqrt{n}+\frac{7}{2}$.
	\end{enumerate}
\end{prop}
\begin{proof}

    \begin{enumerate}
        \item In condition (2) of Lemma \ref{mainlemma}, we require $n' \leq \frac{n}{2}$ because of the bound $|G / \langle k \rangle | \leq n/2$ for any element $k \in G$ with $k \neq 0$. However, if $n$ is odd, then we know that $|G / \langle k \rangle | \leq n/3$, so we only need to require that $n' \leq \frac{n}{3}$ in this condition. Indeed, if $2$ does not divide $|G|$, then $2$ does not divide $|G / \langle k \rangle |$, and induction may be used. Hence, we may relax the requirement $\frac{c}{2} \geq \frac{d}{\sqrt{2}}$ from Lemma \ref{undirectedcondition2} to $\frac{c}{2} \geq \frac{d}{\sqrt{3}}$.
    
        Then, minimizing $d$ with respect to $\frac{c}{2}\geq \frac{d}{\sqrt{3}}$ and $d \geq \frac{1}{ce}+\frac{c}{2}$ yields the solution $c = \sqrt{\frac{1}{(\sqrt{3}-1)e}} \approx 1.0025$ and $ d = \sqrt{\frac{3}{2(\sqrt{3}-1)e}} \approx 0.8682$. The result then follows as in Theorem \ref{undirectedupperbound}.
        
        \item As $2$ and $3$ do not divide $n$, in condition (2) of Lemma \ref{mainlemma}, we only need to require $n' \leq \frac{n}{5}$. Hence, we may relax the requirement $\frac{c}{2} \geq \frac{d}{\sqrt{2}}$ from Lemma \ref{undirectedcondition2} to $\frac{c}{2} \geq \frac{d}{\sqrt{5}}$. Then, minimizing $d$ with respect to $\frac{c}{2}\geq \frac{d}{\sqrt{5}}$ and $d \geq \frac{1}{ce}+\frac{c}{2}$ yields the solution $c = d =  \sqrt{\frac{2}{e}} \approx 0.8578$. The result then follows as in Theorems \ref{undirectedupperbound}.
    \end{enumerate}
\end{proof}

 We note than no further improvement based on $p$ is possible, as $c = d =  \sqrt{\frac{2}{e}} \approx 0.8578$ is the optimal solution when ignoring the constraint $\frac{c}{2} \geq \frac{d}{\sqrt{p}}$. This solution always respects the constraint $\frac{c}{2} \geq \frac{d}{\sqrt{p}}$ when $p\geq 5$, as in those cases $c = d$ and $\sqrt{p}>2$.


\section{Upper bound for directed Cayley graphs}\label{directedsection}
In this section, we consider the game of cops and robbers on directed abelian Cayley graphs. As we consider directed graphs in this section, whenever we have an abelian group $G$ generated by a set $S$, we no longer require that $S = -S$.

We will show that the approach we have outlined in Lemma \ref{mainlemma} gives us an upper bound of $\sqrt{\frac{2}{\left(\sqrt{2}-1\right) e}} \sqrt{n}+2\approx 1.3328\sqrt{n}+2$ on the cop number of directed abelian Cayley graphs of $n$ vertices. In other words, we will show that Meyniel's conjecture still holds for abelian Cayley digraphs, albeit with a worse coefficient than that of Theorem \ref{undirectedupperbound}. Our general approach in this section will be very similar to that of Section \ref{undirectedsection}. We will define functions $g$ and $h$ that satisfy Lemma \ref{mainlemma} for $(\mathcal G_d,c_d)$ and such that $g(n,s,s)$ is not too large. Note that the functions $g$ and $h$ that we will define in this section are not the same as the functions $g$ and $h$ from the previous section. As this section follows the same approach as Section \ref{undirectedsection}, our presentation will be terser. 

In the following proposition, we use Lemma \ref{mainlemma} with $(\mathcal G_d, c_d)$ to establish a directed version of Theorem \ref{thmFrankl}.
The following proposition appears in \cite{HAMIDOUNE1987289}, but
just like Proposition \ref{weakFrankl}, we include the proposition as an instructive example of how to apply Lemma \ref{mainlemma} with $(\mathcal G_d, c_d)$. Furthermore, we will need a result from the proof of the following proposition to prove the main result of this section.

\begin{prop}\label{hamidouneBound}\cite{HAMIDOUNE1987289}
If $\Gamma$ is a Cayley digraph on an abelian group with generating set $S$ such that $0_G\notin S$, then
$$c(\Gamma)\leq |S|+1.$$
\end{prop}

\begin{proof}
We wish to define functions $g$ and $h$ taking values in $\mathcal D \setminus B$ that satisfy the conditions of Lemma \ref{mainlemma} for $\mathcal G_d$ and $c_d$. We choose
	$$g(n,s,t)=t+1;$$
$$h(n,s,t)=1.
$$
We claim that $g$ and $h$ satisfy the conditions of Lemma \ref{mainlemma} for $\mathcal G_d$ and $c_d$. We immediately notice that $h(n,s,t) > 0$ by definition, and for all $(n,s,t) \in \mathcal D \setminus \mathcal B$, we have $t \geq 1$, and hence $g(n,s,t) \geq 2$.

We show that condition (1) of Lemma \ref{mainlemma} is satisfied. Indeed, let $(G,S,T)\in \mathcal G_d$ with respective sizes $(n,s,t) \in \mathcal D \setminus \mathcal B$. For any $a \in T$, $a$ accounts for $a$ with respect to $S$, because there exists $0_G \in S \cup \{0_G\}$ satisfying $a - 0_G = a$. Therefore, $h$ satisfies condition (1) of Lemma \ref{mainlemma}.

Next, we show that condition (2) of Lemma \ref{mainlemma} is satisfied. If $n' \leq n/2$, $s' \leq s$, and $t' \leq t$ are values such that $(n',s',t') \in \mathcal D \setminus \mathcal B$, then $$g(n,s,t) = t+1  \geq t' + 1 = g(n',s',t').$$

Finally, we show that condition (3) of Lemma \ref{mainlemma} is satisfied. If $1 \leq t'\leq t-h(n,s,t)$, then 
$$g(n,s,t) = t + 1 \geq t' + 2 = g(n,s,t')+1.$$

Hence, as $g$ and $h$ satisfy the conditions of Lemma \ref{mainlemma} for $\mathcal G_d$ and $c_d$, it follows that $c_d(n,s,t) \leq g(n,s,t)$ for all $(n,s,t) \in \mathcal D\setminus \mathcal B$. Therefore, for an abelian group $G$ of $n$ elements generated by a set $S \subseteq G$ of $s$ elements, if $\Gamma = \cay(G,S)$, then one of the following holds: either $(n,s,s) \in \mathcal B$ and $c(\Gamma) \leq c_u(n,s,s) = 1 < s+1$, or $(n,s,s) \in \mathcal D \setminus \mathcal B$ and $c(\Gamma) \leq c_u(n,s,s) \leq g(n,s,s) = s+1$. 
\end{proof}

 	We define a function $h$ that satisfies condition (1) of Lemma \ref{mainlemma} for $\mathcal G_d$ and $c_d$, and we will use this definition of $h$ throughout the entire section. The definition of $h$ contains a constant $c > 0$ whose value we will decide later. 
 
 \begin{defn}
	We define the function $h:\mathcal D\setminus \mathcal B\rightarrow \R^{> 0}$ by
	$$h(n,s,t)=
\begin{cases}
1& t\leq c\sqrt{n}\\
\frac{ts}{n-1}&t>c\sqrt{n}.
\end{cases}$$
\end{defn}

	 \begin{lem}\label{directedfunction}
The function $h$ satisfies condition (1) of Lemma \ref{mainlemma} for $(\mathcal G_d, c_d)$.
\end{lem}
\begin{proof}
	We must show that if $(G,S,T) \in \mathcal C_d$ is a triple with respective sizes $(n,s,t) \in \mathcal D \setminus \mathcal B$, then there exists an element $k \in S\cup\{0_G\}$ accounting for at least $h(n,s,t)$ elements of $T$ with respect to $S$.
When $t \leq c \sqrt{n}$, the proof follows the method of Proposition \ref{hamidouneBound}. When $t > c \sqrt{n}$, the proof follows the method of Lemma \ref{undirectedfunction}.
\end{proof}
 
	 Next, we define our function $g$. Again, we will show that $g$ and $h$ satisfy the conditions of Lemma \ref{mainlemma} and that $g(n,s,s)$ is not too large.

	\begin{defn}
	We define the function $g:\mathcal D\setminus \mathcal B\rightarrow \R^{\geq 2}$ by
	
$$g(n,s,t)=\begin{cases}
t+1   &  t\leq c\sqrt{n}\\
\frac{\log \frac{t}{c\sqrt n}}{\log \frac{n-1}{n-s-1}}+c\sqrt n +2   &  t>c\sqrt{n}.\\
\end{cases}$$
\end{defn}

The following three lemmas are analogues of Lemmas \ref{undirectedbound}, \ref{undirectedcondition2} ,and \ref{undirectedcondition3}.

\begin{lem}\label{directedbound}
	Let $d > 0$. If $d\geq \frac{1}{ce}+c$, then $g(n,s,t)\leq d\sqrt{n}+2$. 
\end{lem}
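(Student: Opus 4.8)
The plan is to mimic exactly the case analysis of Lemma \ref{undirectedbound}, making only the arithmetic adjustments forced by the directed definition of $g^*$. Recall the directed $g^*$ differs from the undirected one in the $t > c\sqrt{n}$ branch only by replacing $\frac{c\sqrt n}{2}+\frac 52$ with $c\sqrt n + 2$; correspondingly the hypothesis on $d$ weakens from $d \ge \frac{1}{ce}+\frac c2$ to $d \ge \frac{1}{ce}+c$, and the additive constant $\frac 52$ becomes $2$. First I would dispose of the trivial case $s = n-1$, where $g^*(n,s,t)=1 \le d\sqrt n + 2$ since $d,\,\sqrt n \ge 0$. Then I split into $t \le c\sqrt n$ and $t > c\sqrt n$.

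In the case $t \le c\sqrt n$, the bound $g^*(n,s,t) = \lceil \frac{t+1}{2}\rceil \le \lceil \frac{c\sqrt n + 1}{2}\rceil \le \frac{c\sqrt n}{2} + \frac 32 \le c\sqrt n + 2 \le d\sqrt n + 2$ holds, using $d \ge c$ (which follows from $d \ge \frac{1}{ce}+c \ge c$). In the case $t > c\sqrt n$, I first note the monotonicity $g^*(n,s,t) \le g^*(n,s,s)$ in the middle branch, so it suffices to bound $\frac{\log(s/(c\sqrt n))}{\log((n-1)/(n-s-1))}$. Here I would invoke verbatim the analytic estimate already established inside the proof of Lemma \ref{undirectedbound}: for the choice $\alpha = \frac{1}{ce}$ one has $\frac{\log(s/(c\sqrt n))}{\log((n-1)/(n-s-1))} \le \alpha\sqrt n = \frac{1}{ce}\sqrt n$, for all valid $n,s$. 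That inequality is purely about the quantities $n,s,c$ and does not reference the undirected $g^*$ at all, so it transfers without change. Combining, $g^*(n,s,t) \le \frac{1}{ce}\sqrt n + c\sqrt n + 2 = \left(\frac{1}{ce}+c\right)\sqrt n + 2 \le d\sqrt n + 2$ by the hypothesis on $d$.

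The only genuine obstacle is confirming that the chain of calculus facts used in Lemma \ref{undirectedbound} — that $r_{\alpha,c}(n,s)$ is minimized at $s_n = \frac{n-1}{\alpha\sqrt n + 1}$, that $w_c(n) > z_c(n)$ via $(1+x/y)^y > e^{xy/(x+y)}$, that $z_c'(n) < 0$, and that $\lim_{n\to\infty} z_c(n) = 1$ — depend only on $\alpha = \frac{1}{ce}$ and not on the coefficient $\frac 12$ versus $1$ multiplying $c\sqrt n$ in $g^*$. Since those computations never touch that coefficient, they apply here unchanged, and this is precisely why the paper says the proof is "analogous and thus omitted." I would therefore present the proof compactly, citing the relevant portion of Lemma \ref{undirectedbound}'s proof for the analytic core and only writing out the two short case bounds above with the constant $2$ in place of $\frac 52$ and the hypothesis $d \ge \frac{1}{ce}+c$ in place of $d \ge \frac{1}{ce}+\frac c2$.
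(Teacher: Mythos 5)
Your proof is correct and follows exactly the route the paper intends: the paper omits this proof as ``analogous'' to Lemma \ref{undirectedbound}, and you carry out that analogy faithfully, correctly observing that the analytic estimate $\frac{\log(s/(c\sqrt n))}{\log((n-1)/(n-s-1))}\leq \frac{1}{ce}\sqrt n$ is independent of the coefficient on $c\sqrt n$ and of the additive constant, so only the two elementary case bounds need adjusting. No gaps.
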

\begin{proof}
	We consider two cases :
	\begin{enumerate}
		\item If $t\leq c\sqrt{n}$, then
			$$g(n,s,t)= t + 1 \leq 
			c \sqrt{n} + 1 < d \sqrt{n} + 2.$$
		\item If $t>c\sqrt{n}$, then by the proof of Lemma \ref{undirectedbound}, $ \frac{\log \frac{t}{c\sqrt n}}{\log \frac{n-1}{n-s-1}}\leq \frac{1}{ce} \sqrt{n}$ for all real values $n \geq 3$ and $1 \leq t \leq s \leq n - 2$.
		 Thus, we have
		$$g(n,s,t)=\frac{\log \frac{t}{c\sqrt n}}{\log \frac{n-1}{n-s-1}}+c \sqrt{n}+2\leq \left(\frac{1}{ce}+c\right)\sqrt{n}+2 \leq d\sqrt{n}+2.$$
	\end{enumerate}
\end{proof}

\begin{lem}\label{directedcondition2}
	Let $d > 0$. If $c\geq \frac{d}{\sqrt 2}$ and $d\geq \frac{1}{ce}+c$, then
	$g$ respects condition (2) of Lemma \ref{mainlemma}.
\end{lem}
\begin{proof}
	Consider a choice of $d>0$ such that $c \geq \frac{d}{\sqrt 2}$ and $d\geq \frac{1}{ce}+c$. Let $(n,s,t)\in \mathcal D\setminus \mathcal B$ and $(n',s',t')\in \mathcal D \setminus \mathcal B$ such that $n'\leq \frac{n}{2}$, $s'\leq s$ and $t'\leq t$. We consider two cases:
	\begin{enumerate}
		\item If $t\leq c\sqrt n$, then $$g(n,s,t)=t+1\geq t'+1\geq c_u(n',s',t'),$$
		using a result from the proof of Proposition \ref{hamidouneBound}.
		\item If $t>c\sqrt{n}$, then by the previous lemma and our hypotheses on $c$ and $d$,
		\begin{align*}
			 g(n,s,t)=\frac{\log \frac{t}{c\sqrt n}}{\log \frac{n-1}{n-s-1}}+c \sqrt{n} + 2>  c \sqrt{n}+2 \geq  d\frac{\sqrt{n}}{\sqrt{2}}+2 \geq  d\sqrt{n'}+2 \geq g(n',s',t').
		\end{align*}
	\end{enumerate}
\end{proof}

\begin{lem}\label{directedcondition3}
	The functions $g$ and $h$ respect condition (3) of Lemma \ref{mainlemma} for $(\mathcal G_d, c_d)$.
\end{lem} 

\begin{proof}
Let $(n,s,t)\in \mathcal D\setminus \mathcal B$ and $1\leq t'\leq t-h(n,s,t)$.

We consider three cases :
	\begin{enumerate}
		\item If $1 \leq t\leq c\sqrt{n}$, then $h(n,s,t)= 1$, and thus $t\geq t'+1$. Then,
		$$g(n,s,t)=t + 1 \geq t' + 2= g(n,s,t') + 1.$$
		\item If $t> c\sqrt{n}$ and $t'\leq  c\sqrt{n}$, then
			\begin{align*}
				g(n,s,t)=\frac{\log \frac{t}{c\sqrt n}}{\log \frac{n-1}{n-s-1}}+c \sqrt{n} + 2 > c \sqrt{n} + 2 \geq t' + 2 = g(n,s,t') + 1.
			\end{align*}

		\item If $t,t'>c\sqrt{n}$, we know from Lemma \ref{directedfunction} that $t'\leq t-\frac{ts}{n-1}=t\left(\frac{n-s-1}{n-1}\right)$. Thus,
				$$
				g(n,s,t)=\frac{\log \frac{t}{c\sqrt n}}{\log \frac{n-1}{n-s-1}}  + c \sqrt{n}+ 2  \geq \frac{\log \left(\frac{t'}{c\sqrt n}\cdot  \frac{n-1}{n-s-1}\right)}{\log \frac{n-1}{n-s-1}}  + c \sqrt{n} + 2=\frac{\log \left(\frac{t'}{c\sqrt n} \right)}{\log \frac{n-1}{n-s-1}}+1  + c\sqrt{n} + 2=g(n,s,t')+1.$$ 
	\end{enumerate}
	In all cases, $g(n,s,t) \geq g(n,s,t') + 1$, so condition (3) of Lemma \ref{mainlemma} is satisfied for $(\mathcal G_d, c_d)$.
\end{proof}

\begin{thm}\label{directedupperbound}
	The cop number of any directed Cayley graph on an abelian group of $n$ elements is at most $\sqrt{\frac{2}{\left(\sqrt{2}-1\right) e}} \sqrt{n}+2\approx 1.3328\sqrt{n}+2$.
\end{thm}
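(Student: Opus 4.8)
The plan is to reduce Theorem~\ref{directedupperbound} to Lemma~\ref{mainlemma} in exactly the same way that Theorem~\ref{undirectedupperbound} was derived from it, with the directed functions $g,h$ in place of the undirected ones. First I would observe that we already have all the ingredients: Lemma~\ref{directedfunction} provides an element of $S\cup\{0_G\}$ accounting for at least $h(G,S,T)$ elements of $T$, which is precisely condition~(1) of Lemma~\ref{mainlemma}; Lemma~\ref{directedcondition1} gives condition~(2) under the numerical hypotheses $c\ge d/\sqrt2$ and $d\ge \frac{1}{ce}+c$; and Lemma~\ref{directedcondition2} gives condition~(3) unconditionally. Since $g$ is admissible (as noted just after its definition), Lemma~\ref{mainlemma} applies as soon as we exhibit a pair $(c,d)$ satisfying both inequalities.

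The one genuine computation is the optimization: among pairs $(c,d)$ with $c\ge d/\sqrt2$ and $d\ge \frac{1}{ce}+c$, find the one minimizing $d$, then invoke Lemma~\ref{directedbound} to conclude $g^*(n,s,t)\le d\sqrt n+2$. At the optimum both constraints are tight, so I would set $c=d/\sqrt 2$ and $d=\frac{1}{ce}+c$; substituting the first into the second gives $d=\frac{\sqrt2}{de}+\frac{d}{\sqrt2}$, i.e. $d^2\bigl(1-\tfrac{1}{\sqrt2}\bigr)=\frac{\sqrt2}{e}$, hence
$$d=\sqrt{\frac{\sqrt2}{e\left(1-\frac{1}{\sqrt2}\right)}}=\sqrt{\frac{2}{\left(\sqrt2-1\right)e}}\approx 1.3328,$$
with the corresponding $c=d/\sqrt2=\sqrt{\frac{2}{(2-\sqrt2)e}}$. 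One should check that this critical point is indeed the minimizer (the feasible region is the intersection of two half-planes in $(c,d)$-space, and $d$ decreases as we move toward the corner where both lines meet, so the vertex is optimal); this is routine.

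With this $(c,d)$ in hand, Lemmas~\ref{directedcondition1} and~\ref{directedcondition2} certify conditions~(2) and~(3), Lemma~\ref{directedfunction} certifies condition~(1), and Lemma~\ref{mainlemma} yields $c(G,S,T)\le g(G,S,T)=g^*(|G|,|S|,|T|)$ for all valid triples. Applying Lemma~\ref{directedbound} with our $d$ gives $g^*(n,s,t)\le d\sqrt n+2$, and finally setting $T=S$ gives $c(\cay(G,S))=c(G,S,S)\le d\sqrt{n}+2=\sqrt{\frac{2}{(\sqrt2-1)e}}\sqrt n+2$, as claimed. I do not anticipate a real obstacle here: the structural work is entirely contained in the already-proven Lemmas~\ref{directedfunction}, \ref{directedbound}, \ref{directedcondition1}, \ref{directedcondition2}; the only thing to get right is the two-variable constrained optimization, and the main subtlety is simply verifying that the simultaneous-equality solution is feasible (i.e.\ that $c\ge d/\sqrt2$ holds there, which it does with equality) and optimal.
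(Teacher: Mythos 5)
Your proposal is correct and follows essentially the same route as the paper: both set $c=d/\sqrt{2}$ and $d=\frac{1}{ce}+c$ simultaneously tight, solve to get $d=\sqrt{\frac{2}{(\sqrt{2}-1)e}}$, and then combine Lemmas \ref{directedfunction}, \ref{directedcondition1}, \ref{directedcondition2} and \ref{directedbound} with Lemma \ref{mainlemma} and the choice $T=S$. (One cosmetic slip: your simplified expression for $c$ should be $c=\frac{1}{\sqrt{(\sqrt{2}-1)e}}$ rather than $\sqrt{\frac{2}{(2-\sqrt{2})e}}$; since the relation $c=d/\sqrt{2}$ you actually use is correct and the final bound depends only on $d$, nothing downstream is affected.)
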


\begin{proof}
Let $G$ be an abelian group on $n$ vertices generated by set $S \subseteq G$ (satisfying $0_G\notin S$) of $s$ elements.  If $(n,s,s)\in \mathcal B$, then the result follows directly from Lemma \ref{admissiblelemma}. Otherwise, we assume that $(n,s,s)\in \mathcal D\setminus B$.

We first find values $c$ and $d$ satisfying $c\geq \frac{d}{\sqrt 2}$ and $d\geq \frac{1}{ce}+c$, which minimize $d$. A computation of such values $c$ and $d$ yields $c=\sqrt{\frac{1}{e(\sqrt{2} -1)}} \approx 0.9424$ and
$d = \sqrt{\frac{2}{e(\sqrt{2} -1)}}\approx 1.3328$.

With these chosen values of $c$ and $d$, the lemmas of this section show that our choices for $g$ and $h$ satisfy all three conditions of Lemma \ref{mainlemma} for the case $(\mathcal G_d, c_d)$. Hence, by Lemmas \ref{mainlemma} and \ref{directedbound}, $c(\cay(G,S)) \leq c_d(n,s,s) \leq g(n,s,s)\leq d\sqrt{n}+2$.
\end{proof}

Similarly to Proposition \ref{undirectedprime}, we may obtain marginal improvements on the coefficient of $\sqrt{n}$ by considering the group structure of $G$.

\begin{prop}\label{directedprime}
	Let $G$ be an abelian group on $n$ of vertices, and let $S \subseteq G$ be a generating set of $G$ such that $ 0_G\notin S$. Let $p$ be the smallest prime factor of $n$.
	\begin{enumerate}
		
		\item If $p=3$, then $c(G,S)\leq \sqrt{\frac{3}{\left(\sqrt{3}-1\right) e}}\sqrt{n}+2\approx 1.2278\sqrt{n}+2$.
		\item If $p\geq 5$, then $c(G,S)\leq\frac{2}{\sqrt{e}}\sqrt{n} +2\approx 1.2131\sqrt{n}+2$.
	
	\end{enumerate}
\end{prop}
\begin{proof}
    \begin{enumerate}
        \item As in the proof of Theorem \ref{undirectedupperbound}, we only need to require that $n' \leq \frac{n}{3}$ in condition (2) of Lemma \ref{mainlemma}. Hence, we may relax the requirement $c \geq \frac{d}{\sqrt{2}}$ from Lemma \ref{directedcondition2} to $c \geq \frac{d}{\sqrt{3}}$.
    
        Then, minimizing $d$ with respect to $c\geq \frac{d}{\sqrt{3}}$ and $d \geq \frac{1}{ce}+c$ yields the solution $c=\frac{1}{\sqrt{\left(\sqrt{3}-1\right) e}}\approx 0.7089$ and $d=\sqrt{\frac{3}{\left(\sqrt{3}-1\right) e}}\approx 1.2278$. The result then follows as in Theorem \ref{directedupperbound}.
        
        \item As $2$ and $3$ do not divide $n$, in condition (2) of Lemma \ref{mainlemma}, we only need to require $n' \leq \frac{n}{5}$. Hence, we may relax the requirement $c \geq \frac{d}{\sqrt{2}}$ from Lemma \ref{undirectedcondition2} to $c \geq \frac{d}{\sqrt{5}}$. Then, minimizing $d$ with respect to $c\geq \frac{d}{\sqrt{5}}$ and $d \geq \frac{1}{ce}+c$ yields the solution $c=\frac{1}{\sqrt{e}}\approx 0.6065$ and $d=\frac{2}{\sqrt{e}}\approx 1.2131$. The result then follows as in Theorem \ref{directedupperbound}.
    \end{enumerate}
\end{proof}


\section{Constructions with cop number $\Theta( \sqrt{n})$ }\label{lowersection}

In this section, we will give constructions for abelian Cayley graphs and digraphs on $n$ vertices with cop number $\Theta(\sqrt{n})$. If Meyniel's conjecture is true, then for any graph $G$ on $n$ vertices, the greatest possible cop number of $G$ is of the form $\Theta(\sqrt{n})$. Therefore, for an infinite family $\mathcal G$ of graphs, if for each $n \geq 1$, every graph $G \in \mathcal G$ on $n$ vertices has a cop number of the form $\Theta(\sqrt{n})$, then we say that $\mathcal G$ is a \emph{Meyniel extremal family}.

We will construct a Meyniel extremal family using undirected abelian Cayley graphs and a Meyniel extremal family using directed abelian Cayley graphs. These families will show that the upper bounds in Theorems \ref{undirectedupperbound} and \ref{directedupperbound} are best possible, up to a constant factor. Our constructions will be based on finite fields. We note that in \cite{hasiri}, Hasiri and Shinkar use similar methods to construct Meyniel extremal families of undirected abelian Cayley graphs, and the largest cop number of a graph on $n$ vertices by their construction is $\sqrt{\frac{n}{5}}$. Our Meyniel extremal family of undirected abelian Cayley graphs will give a sharper lower bound and thus improve the results from \cite{hasiri}.

In this section, when we consider an abelian group $G$ generated by a set $S$, we will assume that $0_G\in S$, as this will simplify our notation and our arguments. Then, we consider a ``non-move" of a cop or robber to be equivalent to playing the move $0_G$. Hence, we will assume that on a given move, each cop or robber chooses a move $s \in S$ and plays $s$, and we will not give ``non-moves" special treatment.

We will now define the abelian groups and generating sets used to construct our Meyniel extremal families. 
	Let $p>3$ be a prime, and let $G$ be the additive group $(\mathbb{Z}/p \mathbb{Z})^2$. Note that $G$ is in fact a field equipped with a multiplication operation.
	Let $S_1$ and $S_2$ be defined as follows:
	$$S_1 = \{(x,x^3): x \in \mathbb{Z} / p \mathbb{Z}\},$$ 
	$$S_2 = \{(x,x^2): x \in \mathbb{Z} / p \mathbb{Z}\}.$$

We note that our sets $S_1$ and $S_2$ appear as examples of Sidon subsets for certain finite abelian groups in a paper by Babai and S\'{o}s \cite{Babai}. We will see that our proofs that these generating sets give Cayley graphs of high cop number will be similar to the original arguments from \cite{Babai} showing that these sets are Sidon subsets.

It is straightforward to show that $S_1$ and $S_2$ are both generating sets of $G$, seen as a group. We note that $S_1$ is also closed under inverses, while $S_2$ is not closed under inverses in general. Therefore, we consider $\cay(G,S_1)$ to be an undirected abelian Cayley graph, and we consider $\cay(G,S_2)$ to be a directed abelian Cayley graph. We note that $|G| = p^2$. The next two theorems show that both $\cay(G,S_1)$ and $\cay(G,S_2)$ have a cop number of the form $\Theta(p)$, demonstrating that our constructions indeed give graphs and digraphs on $n$ vertices with cop number $\Theta(\sqrt{n})$.

We note that the proofs of the following theorems use key ideas from Proposition 2 and the subsequent discussion of \cite{FranklCayley} and Proposition 2.1 of \cite{HAMIDOUNE1987289}, specifically about the number of moves that a single cop can guard. In particular, we could shorten our proofs and refer directly to those results, but we nonetheless present the full proofs for the sake of completeness.

\begin{thm}
Let $G$, $S_1$, and $p$ be as in the construction above. Then the cop number of $\cay(G,S_1)$ is exactly $\lceil \frac{1}{2} p \rceil=\left \lceil \frac{1}{2} \sqrt{|G|} \right \rceil$.
\label{lowerBound}
\end{thm}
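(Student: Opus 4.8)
The plan is to establish the upper and lower bounds separately. For the upper bound $c(\cay(G,S_1)) \leq \lceil p/2 \rceil$, I would invoke Theorem \ref{thmFrankl}: since $S_1$ is a symmetric generating set with $|S_1| = p$ (the map $x \mapsto (x,x^3)$ is injective over $\mathbb{Z}/p\mathbb{Z}$), Frankl's bound gives $c(\cay(G,S_1)) \leq \lceil (|S_1|+1)/2 \rceil = \lceil (p+1)/2 \rceil = \lceil p/2 \rceil$ (using that $p$ is odd). So the content is entirely in the matching lower bound $c(\cay(G,S_1)) \geq \lceil p/2 \rceil$.

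For the lower bound, the key structural fact to exploit is that $S_1$ is a \emph{Sidon set}: all pairwise sums $s + s'$ with $s, s' \in S_1$ are distinct (equivalently, all nonzero differences are distinct), as follows from the fact that a cubic polynomial determines few coincidences — concretely, $(x,x^3) + (y,y^3) = (z,z^3)+(w,w^3)$ with $x+y = z+w$ forces $\{x,y\} = \{z,w\}$ over $\mathbb{Z}/p\mathbb{Z}$ when $p > 3$ (this uses $x^3 + y^3 = (x+y)(x^2 - xy + y^2)$ and that $x+y$ determines $xy$, hence the pair). The robber strategy I would set up: the robber picks a starting vertex far from all cops and, on each of his turns, argues that he has a safe move. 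The Sidon property translates into the statement that each cop, sitting at distance considerations aside, can ``threaten'' only a bounded number of the robber's $p$ possible moves — specifically, I would show that a single cop at vertex $c$, when the robber is at $r$, can prevent at most two of the robber's moves in $S_1$ from being immediately safe, and more carefully, over the course of the game each cop can be responsible for eliminating at most two elements of the effective moveset. Hence $k$ cops can block at most $2k$ of the $p$ moves, and if $2k < p$, i.e. $k < p/2$, i.e. $k \leq \lceil p/2 \rceil - 1$, the robber always retains a safe move and escapes forever. This gives $c \geq \lceil p/2 \rceil$.

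More precisely, I expect the robber's invariant to be phrased in terms of the cops' positions relative to the robber: define, for each cop, the set of ``dangerous directions'' as those $s \in S_1$ such that the cop could intercept the robber if he played $s$ repeatedly; the Sidon property (distinct differences) is exactly what forces each cop's dangerous set to have size at most $2$, because a difference $c - r$ can be written as $s - s'$ with $s, s' \in S_1 \cup \{0_G\}$ in at most one way up to the symmetry $s \leftrightarrow s'$, and thus $c - r$ ``accounts for'' (in the sense of Definition \ref{accountsdef}) at most two elements of $S_1$. The robber maintains that the union of all dangerous sets has size $\leq 2(\lceil p/2\rceil - 1) < p$ and plays any safe direction; one must check this invariant is preserved after a full round (robber moves, then cops move), which is where the ``repeatedly playing the same move'' idea from Section \ref{lemmasection} enters — a cop only becomes newly dangerous in a direction if it has spent moves lining up, and the robber can always sidestep. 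The count $\lvert G \rvert = p^2$ then gives $\lceil \frac12 \sqrt{|G|}\rceil$.

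The main obstacle I anticipate is making the lower bound argument fully rigorous rather than heuristic: it is easy to see that at any single instant few moves are blocked, but cops can reposition, so one must give a genuine potential-function or invariant argument showing the robber is \emph{never} cornered, carefully handling the bookkeeping of which cop controls which direction and ensuring that when a cop ``switches'' which directions it threatens, the robber has already gained enough slack. I would model this on the standard lower-bound arguments for Cayley graphs on Sidon sets (and the approach of Hasiri–Shinkar \cite{hasiri} and Frankl \cite{FranklCayley}), adapting the constant from their $\sqrt{|G|/5}$ to the sharp $\frac12\sqrt{|G|}$ by using that $S_1$ is Sidon with the optimal parameters — this tightening of the constant is the real novelty and the step most likely to require care.
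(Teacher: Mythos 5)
Your proposal matches the paper's proof: the upper bound is exactly Theorem \ref{thmFrankl}, and the lower bound is the paper's argument that a cop at difference $(a,b)\neq(0,0)$ from the robber guards at most two of the $p$ robber moves (the paper just solves the quadratic $3ax^2-3a^2x+a^3=b$ directly rather than phrasing it via the Sidon property), so fewer than $\lceil p/2\rceil$ cops always leave an unguarded move. The ``main obstacle'' you anticipate is not actually an issue --- since guarding is defined by one-step capture after the robber's move, the per-turn count $2k<p$ already yields a safe move on every robber turn, and no potential function or bookkeeping about cops repositioning is needed.
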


\begin{proof}
We first give a lower bound for the cop number of $\cay(G,S_1)$. Whenever a cop is able to capture the robber immediately after the robber plays a move $(x,x^3)$, we say that the cop guards the move $(x,x^3)$. We show that a single cop cannot simultaneously guard more than two robber moves. Let $v \in G$ be a vertex occupied by a cop $C$, and let $r \in G$ be the vertex occupied by the robber. If the robber is not yet caught, then $v - r = (a,b)$, for some elements $a$ and $b$ that are not both zero. If $C$ guards a move $(x,x^3) \in S_1$, then there must exist a move $(y, y^3) \in S_1$ by which $C$ can capture the robber in reply to $(x,x^3)$. It then follows that $(x,x^3) - (y,y^3) = (a,b)$. Thus $x$ and $y$ must satisfy
$$x - y = a$$
$$x^3 - y^3 = b.$$
By substitution, we obtain the equation 
$$a^3 - 3a^2x + 3ax^2 = b.$$
We see that if $a \neq 0$, then the system of equations has at most two solutions; otherwise, $a = b = 0$. Therefore, for fixed elements $a$ and $b$ not both equal to $0$, there exist at most two values $x$ for which a solution to the system of equations exists. Hence $C$ guards at most two robber moves $(x,x^3) \in S_1$. 

The robber has a total number of moves equal to $|S_1| = p = \sqrt{|G|}$. If the total number of cops is less than $\frac{1}{2}p$, then the robber will always have some move that is not guarded by any cop. Then by naively moving to an unguarded vertex on each turn, the robber can evade capture forever. Hence the cop number of $\cay(G,S_1)$ is at least $\frac{1}{2}p = \frac{1}{2} \sqrt{|G|}$. As cop number is an integer, the cop number of $\cay(G,S_1)$ therefore is at least $\lceil \frac{1}{2}p \rceil$. It follows from Theorem \ref{thmFrankl} that the cop number of $\cay(G,S_1)$ is exactly $\lceil \frac{1}{2}p \rceil$.
\end{proof}

We now show an analoguous result for directed graphs.

\begin{thm}
Let $G$, $S_2$, and $p$ be as in the construction above. Then the cop number of the directed graph $\cay(G,S_2)$ is equal to $|S_2| = p = \sqrt{|G|}$.
\label{directedLowerBound}
\end{thm}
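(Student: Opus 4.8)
The plan is to mirror the structure of the proof of Theorem \ref{lowerBound}, establishing matching upper and lower bounds. The upper bound $c(\cay(G,S_2)) \leq |S_2| = p$ is immediate from Theorem \ref{directedFrankl} (setting $T=S$), so the work is entirely in the lower bound $c(\cay(G,S_2)) \geq p$. As before, I would say a cop $C$ occupying vertex $v$ \emph{guards} a robber move $(x,x^2) \in S_2$ if, whenever the robber at vertex $r$ plays $(x,x^2)$, the cop can immediately move to capture him; since in the directed game the cop must also move along an arc of $S_2$, this means there is some $(y,y^2) \in S_2 \cup \{0_G\}$ with $v + (y,y^2) = r + (x,x^2)$, i.e. $(x,x^2) - (y,y^2) = v - r$.

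The key step is to bound the number of moves a single cop can guard. Write $v - r = (a,b)$ with $(a,b) \neq (0,0)$ (the robber is not yet caught). If $C$ guards $(x,x^2)$ via reply $(y,y^2)$, then $x - y = a$ and $x^2 - y^2 = b$. Substituting $y = x - a$ gives $x^2 - (x-a)^2 = 2ax - a^2 = b$. If $a \neq 0$, this is a linear equation in $x$ with exactly one solution, so $C$ guards at most one move $(x,x^2)$; if $a = 0$ then $b=0$ as well, a contradiction. One must also handle the reply $y = 0_G$ (i.e. $(y,y^2) = (0,0)$), but this is just the case $x = a$, $x^2 = b$, already subsumed. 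So in the directed setting each cop guards \emph{at most one} robber move — this is the crucial asymmetry with the undirected cubic case, where each cop guarded two. Here the main obstacle, such as it is, is simply being careful that "staying put" is encoded as playing $0_G \in S_2$ (as the paper sets up in this section) so that no extra guarding capability is hidden in a cop choosing not to move.

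Having shown each cop guards at most one of the $|S_2| = p$ robber moves, I would finish exactly as in Theorem \ref{lowerBound}: if fewer than $p$ cops are present, then at every robber turn there is at least one move $(x,x^2)$ guarded by no cop, and the robber plays it, moving to a vertex where no cop sits. Repeating forever, the robber evades capture, so $c(\cay(G,S_2)) \geq p$. Combined with the upper bound from Theorem \ref{directedFrankl}, $c(\cay(G,S_2)) = p = \sqrt{|G|}$. I would also remark that this is what makes $\cay(G,S_2)$ a directed Meyniel extremal family in the strong sense promised in the introduction: the cop number hits essentially the full conjectured $\sqrt{n}$ rather than a smaller constant times $\sqrt{n}$.
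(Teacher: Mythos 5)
Your proposal is correct and follows essentially the same route as the paper: the upper bound from Theorem \ref{directedFrankl} with $T=S$, and the lower bound by showing each cop guards at most one move $(x,x^2)$ via the linear equation $2ax-a^2=b$ (the paper writes this with a harmless sign slip as $a^2-2ax=b$), so fewer than $p$ cops always leave an unguarded move. Your added care about the reply $0_G$ being the element $(0,0^2)\in S_2$ matches the paper's convention in this section that $0_G\in S$.
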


\begin{proof}
We first give a lower bound for the cop number of $\cay(G,S_2)$.
Whenever a cop is able to capture the robber immediately after the robber plays a move $(x,x^2)$, we say that the cop guards the move $(x,x^2)$. We show that a single cop cannot guard more than one robber move. Let $v \in G$ be a vertex occupied by a cop $C$, and let $r \in G$ be the vertex occupied by the robber. If the robber is not yet caught, then $v - r = (a,b)$, for some elements $a$ and $b$ that are not both zero. If $C$ guards a move $(x,x^2)$, then there must exist a move $(y, y^2)$ by which $C$ can capture the robber in reply to $(x,x^2)$. It then follows that $(x,x^2) - (y,y^2) = (a,b)$. Thus $x$ and $y$ must satisfy
$$x - y = a$$
$$x^2 - y^2 = b.$$

By substitution, we obtain the equation $a^2 -2ax = b$, from which we see that whenever $a \neq 0$, $x$ is uniquely determined; otherwise $a = b = 0$. Therefore, for fixed elements $a$ and $b$ not both equal to $0$, there exists exactly one value $x$ for which a solution to the system of equations exists. Hence the cop occupying $C$ guards at most one robber move $(x,x^2) \in S_2$. 

The robber has a total number of moves equal to $|S_2| = p = \sqrt{|G|}$. If the total number of cops is less than $p$, then the robber will always have some move that is not guarded by any cop. Then by naively moving to an unguarded vertex on each turn, the robber can evade capture forever. Hence the cop number of $\cay(G,S_2)$ is at least $|S_2| = p = \sqrt{|G|}$. It follows from Theorem \ref{hamidouneBound} that the cop number of $\cay(G,S_2)$ is exactly $p$.
\end{proof}

Our construction in Theorem \ref{directedLowerBound} implies that if Meyniel's conjecture holds for strongly connected directed graphs, written as $c(\Gamma)\leq c\sqrt{n}$, then the coefficient must respect $c\geq 1$. Although our construction in Theorem \ref{directedLowerBound} uses a digraph whose order is the square of a prime, by using a common argument based on the density of primes (c.f. \cite[Corollary 4.2]{pawelRandom}), we may extend our construction to give a digraph with an order of any large integer $n$ and a cop number of $(1 - o(1)) \sqrt{n}$. This will give us a Meyniel extremal family of digraphs. It is shown in \cite{Baird, BonatoBurgess, Seamone, pawelRandom} that there exist graph and digraph families on $n$ vertices with cop number $\Omega(\sqrt{n})$, but to the authors' knowledge, our multiplicative coefficient of $1-o(1)$ is the largest of any digraph construction.

\begin{coro}
\label{corAllN}
For $n$ sufficiently large, there exist a strongly connected directed graph on $n$ vertices with cop number at least $\sqrt{n - 2n^{0.7625}}  = (1-o(1)) \sqrt{n}$.
\end{coro}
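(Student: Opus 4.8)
The plan is to use Theorem \ref{directedLowerBound} as a black box and build larger graphs by taking disjoint-union-like products that keep the cop number high while filling in the vertex count to hit an arbitrary target $n$. Theorem \ref{directedLowerBound} gives, for every prime $p>3$, a strongly connected directed abelian Cayley graph on $p^2$ vertices with cop number exactly $p$. The obstacle is that the values $p^2$ are sparse, so for a general $n$ we cannot simply take one of these graphs; we must (i) choose a prime $p$ with $p^2$ as close to $n$ as possible from below, and (ii) pad the construction up to exactly $n$ vertices without destroying strong connectivity or dropping the cop number below $(1-o(1))\sqrt n$.

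First I would invoke a prime gap estimate: by known results (e.g.\ Baker--Harman--Pintz, or even weaker classical bounds suffice here), for $n$ large there is a prime $p$ with $p \le \sqrt{n}$ and $p \ge \sqrt{n} - n^{0.3/2}$-ish; more precisely, writing $m=\lfloor\sqrt n\rfloor$, there is a prime $p$ with $m - m^{0.525} \le p \le m$, so that $p^2 \ge n - 2n^{0.7625}$ after a short computation (this is exactly the bound in the statement, and the exponent $0.7625 = (1+0.525)/2$ traces back to the $0.525$ exponent in the Baker--Harman--Pintz prime gap theorem). This is the step where the stated numerical constant $0.7625$ comes from, and it is the main technical input; everything else is routine.

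Next, take the directed graph $\Gamma_0 = \cay((\Z/p\Z)^2, S_2)$ from Theorem \ref{directedLowerBound}, which has $p^2$ vertices and cop number $p$. To get to exactly $n$ vertices I would add $n - p^2 \le 2n^{0.7625}$ extra vertices. One clean way: form a new digraph by attaching the extra vertices so as to preserve strong connectedness while not helping the robber — for instance, arrange the extra vertices in a directed cycle and splice it into $\Gamma_0$ by redirecting a single arc, or alternatively observe that adding a ``pendant'' strongly connected gadget through a cut vertex cannot decrease the cop number of the original component (a cop that would catch the robber on $\Gamma_0$ alone still does so, and if the robber ever leaves $\Gamma_0$ he can be cornered in the small gadget with one extra cop — but in fact we don't even need the extra cop, since $p \ge$ any constant). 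I would phrase this as: there is a strongly connected digraph on $n$ vertices containing $\Gamma_0$ as a ``retract''-like subgraph such that $c(\text{new graph}) \ge c(\Gamma_0) = p$.

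Finally, assemble the inequality: the resulting digraph on $n$ vertices has cop number at least $p \ge \sqrt{p^2} \ge \sqrt{n - 2n^{0.7625}}$, and since $\sqrt{n - 2n^{0.7625}} = \sqrt n\,\sqrt{1 - 2n^{-0.2375}} = (1 - o(1))\sqrt n$, the corollary follows. I expect the hardest part to be pinning down the padding construction cleanly — making sure the digraph stays strongly connected and that the cop number genuinely does not drop — since the prime-gap input is entirely off-the-shelf; but this is a standard maneuver and can likely be handled in a sentence or two by taking the disjoint union with a directed path/cycle and adding back arcs, or by a domination argument showing the small added part is irrelevant to the robber's survival in the main component.
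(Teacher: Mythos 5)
Your proposal is correct and follows essentially the same route as the paper: the same Baker--Harman--Pintz prime-gap input yielding a prime square in $[n-2n^{0.7625},n]$, followed by padding $\cay((\Z/p\Z)^2,S_2)$ up to exactly $n$ vertices with a pendant strongly connected gadget (the paper attaches a sufficiently long bidirectional path to one vertex and, like you, treats the preservation of the cop number as routine).
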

\begin{proof}
We borrow a lemma from number theory which tells us that for $x$ sufficiently large, there exists a prime in the interval $[x-x^{0.525}, x]$ \cite{Baker}. From this lemma it follows that for sufficiently large $x$, there exists a square of a prime in the interval $[x - 2x^{0.7625}, x]$.

For our construction, we let $n$ be sufficiently large, and we choose a prime number $p > 3$ with $p^2 \in [n - 2n^{0.7625}, n]$. We let $G = (\mathbb{Z}/p\mathbb{Z})^2$, and we let $S_2$ be as in Theorem \ref{directedLowerBound}. We then attach a sufficiently long bidirectional path to one of the vertices of $\cay(G,S_2)$, which increases the number of vertices without changing the cop number. This gives us a strongly connected directed graph on $n$ vertices with cop number equal to $c(G,S_2) = p \geq \sqrt{n - 2n^{0.7625}} = (1 - o(1))\sqrt{n}$.
\end{proof}

By using a similar approach, the construction in Theorem \ref{lowerBound} can be modified to give a Meyniel extremal family of undirected graphs on $n$ vertices with cop number $(\frac{1}{2} - o(1)) \sqrt{n}$. However, this lower bound is not best possible, as constructions from \cite{BonatoBurgess} and \cite{pawelRandom} show that there exist undirected graph families in which a graph on $n$ vertices has cop number $(\frac{\sqrt{2}}{2} - o(1)) \sqrt{n}$.

	
\section{Further directions}\label{furtherdirections}
We conjecture that the constructions given in Theorems \ref{lowerBound} and \ref{directedLowerBound} have greatest possible cop number in terms of $n$, up to an additive constant.
\begin{conj}
The cop number of any undirected Cayley graph on an abelian group of $n$ elements is at most $\frac{1}{2} \sqrt{n} + O(1)$.
\end{conj}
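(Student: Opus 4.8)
Since the final statement is a conjecture, what follows is a research plan rather than a routine argument; I will flag the point at which, to my knowledge, no current technique is strong enough.

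The natural framework is Lemma~\ref{mainlemma}: it suffices to exhibit an undirected admissible function $g$ and an accounting function $h$ satisfying conditions (1)--(3) with $g(G,S,S)\le \tfrac12\sqrt{|G|}+O(1)$. It is worth recalling exactly where the bound $0.94\sqrt n+\tfrac52$ of Theorem~\ref{undirectedupperbound} is wasteful. The strategy has two phases: a \emph{reduction phase} that repeatedly uses an element accounting for $\tfrac{|T||S|}{|G|-1}$ moves to shrink the robber's moveset, and a \emph{pairing phase} that handles the remaining $\Theta(\sqrt n)$ moves two at a time as in Theorem~\ref{thmFrankl}. When $|S|=\Theta(\sqrt n)$ --- which is precisely the regime of the extremal examples --- the cost of the reduction phase and the cost of the pairing phase are in direct competition, and the optimum of their sum is a constant strictly larger than $\tfrac12$ --- roughly $0.86$, as in Corollary~\ref{undirectedprime}. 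This, rather than the quotient/halving losses, is the true source of the constant. To reach $\tfrac12\sqrt n+O(1)$ one must make the reduction phase cost only $O(1)$ cops for \emph{every} generating set.

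My plan is a dichotomy on the size of $S$. If $|S|\le(1+o(1))\sqrt n$, then Theorem~\ref{thmFrankl} already gives $c(G,S)\le\bigl\lceil\tfrac{|S|+1}{2}\bigr\rceil\le\tfrac12\sqrt n+O(1)$, with nothing further to do. If $|S|=\Omega(n)$, then one can afford to run the pigeonhole reduction all the way down to $|T|=O(1)$: each step shrinks $|T|$ by a constant factor, so $O(\log n)=o(\sqrt n)$ cops suffice and the conjectured bound holds with room to spare. The entire difficulty is the intermediate window $\sqrt n\ll|S|\ll n$, and especially the critical scale $|S|=\Theta(\sqrt n)$ with a constant larger than one. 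Here the hope is a \emph{structural trade-off}: a generating set $S$ of an abelian group of order $n$ with $|S|$ well above $\sqrt n$ cannot be close to Sidon --- by the classical bound, Sidon sets have size $(1+o(1))\sqrt n$ --- so some nonzero $k\in G$ has many representations as a difference $a-b$ with $a,b\in S$, and such a $k$ accounts for correspondingly many robber moves. One would want this quantitatively: for $|S|=c\sqrt n$ there should exist $k$ accounting for a number of moves growing fast enough in $c$ --- faster than the average $\tfrac{|S|^2}{n}\approx c^2$ guarantees --- that the geometric reduction terminates after $O(1)$ steps, or else $S$ must itself contain a large coset-progression that a single cop can neutralize. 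Proving such a statement uniformly over all finite abelian groups --- essentially a sharp $B_2[g]$-type or Freiman-type result tailored to generating sets --- is, I believe, the main obstacle, and it is why the conjecture is open: the pigeonhole accounting bound alone provably saturates near $0.86\sqrt n$, so a genuinely new additive-combinatorial input --- or a cop strategy that goes beyond the accounting framework entirely, exploiting the cooperative geometry of several cops rather than the sum of their individual difference-maintaining strategies --- appears to be required.

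Given this, a sensible route to partial progress is to settle the conjecture first for structured families where the needed input is available: cyclic groups $\mathbb{Z}/n\mathbb{Z}$, where one can try to force arithmetic-progression structure inside large generating sets, and elementary abelian groups $(\mathbb{Z}/p\mathbb{Z})^k$, where linear-algebraic arguments over $\mathbb{F}_p$ can replace the coset-progression machinery. The case $(\mathbb{Z}/p\mathbb{Z})^2$, which must in particular reproduce the exact value $\lceil\tfrac12 p\rceil$ of Theorem~\ref{lowerBound}, is the decisive test of whether the approach can attain the sharp constant.
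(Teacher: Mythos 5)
The statement you were asked to prove is presented in the paper only as a conjecture; the paper contains no proof of it, so there is nothing of the authors' to compare your argument against. You correctly recognize this and do not claim a proof: what you give is a research plan, and as such it necessarily leaves the conjecture open. That said, your diagnosis of the obstruction is consistent with the paper's own machinery. Within Lemma~\ref{mainlemma}, the average-case pigeonhole accounting $h=\frac{|T||S|}{|G|-1}$, optimized against the terminal pairing phase, bottoms out at the constant $\sqrt{2/e}\approx 0.858$ even after the halving loss in condition (2) is discarded (this is exactly case (2) of Corollary~\ref{undirectedprime}), so reaching $\tfrac12$ does require either accounting elements that beat the average number of difference representations --- which, as you say, is a quantitative ``large generating sets are far from Sidon'' statement uniform over all finite abelian groups --- or a cop strategy that leaves the accounting framework altogether. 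Your identification of $|S|=\Theta(\sqrt n)$ as the critical scale, and of $(\mathbb{Z}/p\mathbb{Z})^2$ with the cubic Sidon set as the decisive test case, matches the extremal construction of Theorem~\ref{lowerBound}.

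One concrete slip in the plan: the first branch of your dichotomy does not deliver the conjectured bound even in its own regime. If $|S|\le(1+o(1))\sqrt n$, Theorem~\ref{thmFrankl} gives only $c(G,S)\le\tfrac12\sqrt n+o(\sqrt n)$, which exceeds $\tfrac12\sqrt n+C$ for every fixed $C$; to land in $\tfrac12\sqrt n+O(1)$ by pairing alone you need $|S|\le\sqrt n+O(1)$, and the window $\sqrt n+O(1)<|S|\le(1+o(1))\sqrt n$ is then untreated. (The $|S|=\Omega(n)$ branch is fine, since $O(\log n)$ cops is eventually below $\tfrac12\sqrt n$.) This does not change your overall conclusion --- the conjecture remains open and the missing ingredient is the additive-combinatorial input you identify --- but the dichotomy should be restated so that the ``easy'' cases genuinely meet the claimed additive-constant form of the bound.
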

\begin{conj}
The cop number of any directed Cayley graph on an abelian group of $n$ elements is at most $\sqrt{n} + O(1)$.
\end{conj}

There are multiple possible avenues of improvement on the proofs of this article. One obvious improvement would be to improve our bounds on the number of robber moves that can be accounted for by one group element. In the explanation behind the choice of $g$, the inequality can be strengthened to be $z_i\geq z_{i-1}+\left\lceil\frac{s(t-z_{i-1})}{n-i}\right\rceil$, as $z_i$ is always an integer and as we can apply the pigeonhole argument only to choose elements which have not previously been chosen. Resolution of this recursion might suggest a better function.

We note that as $g$ and $h$ are defined over integers, the proofs of our upper bounds only depend on the sizes of $G$, $S$ and $T$. Another possible improvement would be to use other group properties of $G$, $S$, and $T$ to get better bounds on the number of robber moves that a single element $k \in G$ can account for, or to better characterize the structure of a quotient $G/\langle k\rangle$ in our inductive strategy.

	
\section*{Acknowledgements}
We thank Ladislav Stacho and Geňa Hahn for bringing this problem to our attention. We also thank Ladislav Stacho, Bojan Mohar, Geňa Hahn, and Ben Seamone for fruitful discussions and encouragement. We furthermore thank Matt Devos for his suggestion to consider Sidon subsets for constructing abelian Cayley graphs with cop number $\Theta(\sqrt{n})$, Frank Ramamonjisoa for his idea of obtaining improved bounds by considering prime factorizations and for his help in organizing earlier versions of the article, as well as Julien Codsi, Alizée Gagnon and Simon St-Amant for helpful comments. Finally, we thank the referees for their comments, which have greatly improved the presentation of this article.


\raggedright
\bibliographystyle{abbrv}
\bibliography{refs}

\begin{thebibliography}{10}

\bibitem{Aigner}
M.~Aigner and M.~Fromme.
\newblock A game of cops and robbers.
\newblock {\em Discrete Applied Mathematics}, 8(1):1 -- 12, 1984.

\bibitem{Babai}
L.~Babai and V.~T. S\'os.
\newblock {Sidon Sets in Groups and Induced Subgraphs of Cayley Graphs}.
\newblock {\em European Journal of Combinatorics}, 6(2):101 -- 114, 1985.

\bibitem{Baird}
W.~Baird and A.~Bonato.
\newblock {Meyniel’s conjecture on the cop number: A survey}.
\newblock {\em Journal of Combinatorics}, 3, 08 2013.

\bibitem{Baker}
R.~Baker, G.~Harman, and J.~Pintz.
\newblock {The Difference Between Consecutive Primes, II}.
\newblock {\em Proceedings of the London Mathematical Society. Third Series},
  83, 11 2001.

\bibitem{BonatoBurgess}
A.~Bonato and A.~Burgess.
\newblock Cops and robbers on graphs based on designs.
\newblock {\em J. Combin. Des.}, 21(9):404--418, 2013.

\bibitem{PB}
P.~Bradshaw.
\newblock A proof of the {Meyniel} conjecture for abelian {Cayley} graphs.
\newblock {\em Discrete Mathematics}, 2019.

\bibitem{FranklGirth}
P.~Frankl.
\newblock {Cops and robbers in graphs with large girth and Cayley graphs}.
\newblock {\em Discrete Applied Mathematics}, 17(3):301 -- 305, 1987.

\bibitem{FranklCayley}
P.~Frankl.
\newblock On a pursuit game on {Cayley} graphs.
\newblock {\em Combinatorica}, 7(1):67--70, Mar 1987.

\bibitem{Frieze}
A.~Frieze, M.~Krivelevich, and P.-S. Loh.
\newblock Variations on cops and robbers.
\newblock {\em J. Graph Theory}, 69(4):383--402, 2012.

\bibitem{Gavenciak}
T.~Gavenčiak, P.~Gordinowicz, V.~Jelínek, P.~Klavík, and J.~Kratochvíl.
\newblock Cops and robbers on intersection graphs.
\newblock {\em European Journal of Combinatorics}, 72:45 -- 69, 2018.

\bibitem{oriented2}
S.~{Gonzalez Hermosillo de la Maza}, S.~A. {Hosseini}, F.~{Knox}, B.~{Mohar},
  and B.~{Reed}.
\newblock {Cops and robbers on oriented toroidal grids}.
\newblock {\em arXiv e-prints}, page arXiv:1904.10113, Apr 2019.

\bibitem{HAMIDOUNE1987289}
Y.~O. Hamidoune.
\newblock On a pursuit game on cayley digraphs.
\newblock {\em European Journal of Combinatorics}, 8(3):289 -- 295, 1987.

\bibitem{hasiri}
F.~{Hasiri} and I.~{Shinkar}.
\newblock {Meyniel Extremal Families of Abelian Cayley Graphs}.
\newblock {\em arXiv e-prints}, page arXiv:1909.03027, Sep 2019.

\bibitem{Seyyedthesis}
S.~A. {Hosseini}.
\newblock {\em {Game of Cops and Robbers on Eulerian Digraphs}}.
\newblock PhD thesis, Simon Fraser University, 2018.

\bibitem{Seyyed}
S.~A. Hosseini and B.~Mohar.
\newblock Game of cops and robbers in oriented quotients of the integer grid.
\newblock {\em Discrete Mathematics}, 341(2):439 -- 450, 2018.

\bibitem{Mathematica}
W.~R. Inc.
\newblock Mathematica, {V}ersion 12.1.
\newblock Champaign, IL, 2020.

\bibitem{Joret2008TheCA}
G.~Joret, M.~Kaminski, and D.~O. Theis.
\newblock {The Cops and Robber game on graphs with forbidden (induced)
  subgraphs}.
\newblock {\em Contributions to Discrete Mathematics}, 5, 2008.

\bibitem{Seamone}
D.~{Khatri}, N.~{Komarov}, A.~{Krim-Yee}, N.~{Kumar}, B.~{Seamone},
  V.~{Virgile}, and A.~{Xu}.
\newblock {A study of cops and robbers in oriented graphs}.
\newblock {\em arXiv e-prints}, page arXiv:1811.06155, Nov 2018.

\bibitem{inequalitylist}
L.~Kozma's.
\newblock Useful inequalities.
\newblock \textit{Available at
  \url{http://www.lkozma.net/inequalities\_cheat\_sheet/ineq.pdf}}, 2020.

\bibitem{Kuang}
J.~C. Kuang.
\newblock {\em Changyong budengshi}.
\newblock Hunan Jiaoyu Chubanshe, Changsha, fourth edition, 2010.
\newblock (Chinese).

\bibitem{lupeng}
L.~Lu and X.~Peng.
\newblock {On Meyniel's conjecture of the cop number}.
\newblock {\em Journal of Graph Theory}, 71(2):192--205, 2012.

\bibitem{Masood}
M.~{Masjoody}.
\newblock {\em {Cops and Robbers on Geometric Graphs and Graphs with a Set of
  Forbidden Subgraphs}}.
\newblock PhD thesis, Simon Fraser University, 2019.

\bibitem{Nowakowski}
R.~Nowakowski and P.~Winkler.
\newblock Vertex-to-vertex pursuit in a graph.
\newblock {\em Discrete Mathematics}, 43(2):235 -- 239, 1983.

\bibitem{pawelRandom}
P.~Pra\l~at.
\newblock When does a random graph have constant cop number?
\newblock {\em Australas. J. Combin.}, 46:285--296, 2010.

\bibitem{Quilliot}
A.~Quilliot.
\newblock {\em Probl\`emes de jeux, de point fixe, de connectivit\'e et de
  repr\'esentation sur des graphes, des ensembles ordonn\'es et des
  hypergraphes}.
\newblock PhD thesis, Universit\'e de Paris VI, 1978.

\bibitem{QUILLIOT198589}
A.~Quilliot.
\newblock A short note about pursuit games played on a graph with a given
  genus.
\newblock {\em Journal of Combinatorial Theory, Series B}, 38(1):89 -- 92,
  1985.

\bibitem{Schroder2001}
B.~S.~W. Schr{\"o}der.
\newblock {\em The Copnumber of a Graph is Bounded by [3/2 genus (G)] + 3},
  pages 243--263.
\newblock Birkh{\"a}user Boston, Boston, MA, 2001.

\bibitem{wagner}
Z.~A. Wagner.
\newblock Cops and robbers on diameter two graphs.
\newblock {\em Discrete Mathematics}, 338(3):107 -- 109, 2015.

\end{thebibliography}

\end{document}